\newtheorem{theorem}{Theorem}[section]
\newtheorem{proposition}[theorem]{Proposition}
\newtheorem{lemma}[theorem]{Lemma}
\theoremstyle{remark}
\newtheorem{remark}[theorem]{Remark}
\theoremstyle{definition}
\newtheorem{definition}{Definition}[section]
\newcommand{\abs}[1]{\left\vert #1 \right\vert}
\title[Self-Interacting Random Walks]{Scaling Limit of Asymptotically-free Self-interacting Random Walks to Brownian Motion Perturbed at Extrema}
\author{Xiaoyu Liu}
\address{Xiaoyu Liu, Purdue University, United States of America.}
\email{liu2999@purdue.edu}
\author{Zhe Wang}
\address{Zhe Wang, \'{E}cole Polytechnique F\'{e}d\'{e}rale de Lausanne, Switzerland.}
\email{zhe.wang@epfl.ch}
\thanks{Xiaoyu Liu gratefully acknowledges the support from the Purdue Mathematics Department through the Joel Spira Research Fellowship (2023).}
\thanks{Zhe Wang gratefully acknowledges the support from the Swiss National Science Foundation, grant 200021L--169691.}
\thanks{Both authors thank Prof. Jonathon Peterson and Prof. Thomas Mountford for their suggestion of the problem and for enriching discussions.}
\subjclass[2020]{Primary 60K35; Secondary 60F17, 60J85}
\keywords{Self-interacting random walk, Brownian motion perturbed at extrema, Branching-like processes, Functional limit theorem}
\begin{document}

\begin{abstract}
	We consider a family of one-dimensional self interacting walks whose dynamics characterized by a monotone weight function $w$ on $\mathbb{N}\cup \{0\}$. The weight function takes the form $w(n) = (1 + 2^p Bn^{-p} + O(n^{-1-\kappa}))^{-1}$, for some $B \in \mathbb{R} $, $\kappa>0$ and $p\in (0,1]$.
	Our main model parameter is $p$, and for $p\in (0,1/2]$ we show the convergence of the SIRW to Brownian motion perturbed at extrema under the diffusive scaling. This completes the functional limit theorem in [8] for the asymptotically free case and extends the result to the full parameter range $(0,1]$.
	Our method depends on the generalized Ray-Knight theorems (\cite{T96}, \cite{KMP23}) for the rescaled local times of this walk.
	The directed edge local times, described by the branching-like processes, are used to analyze the total drift experienced by the walker. 
\end{abstract}

\maketitle

\section{Introduction and Main Result}

We consider a discrete time nearest neighbor self-interacting random walk (SIRW) $X = (X_k)_{k\geq 0}$ on $\mathbb{Z}$ as in \cite{T96,KMP23}. More precisely, let $\Omega = \left\{\omega = (\omega_i)_{i \geq 0}: \omega_0 = 0, |\omega_i - \omega_{i - 1}| = 1 \right\}$ be the set of all nearest neighbor paths starting at 0, and let $\mathcal{F}_i, i \ge 0$ be the $\sigma$-algebra generated by all sets of the form $\{\omega_j = x_j, j = 0, 1, \ldots, i\} $, and $\mathcal{F} = \sigma\left( \cup_{i = 0}^\infty   \mathcal{F}_i\right)$. For each $\omega$ we define the undirected edge local times

\[ 
l_x^k(\omega) = \sum_{j=0}^{k-1} \mathbb{1}_{ \left\{  \left\{\omega_j, \omega_{j+1}\right\} =  \left\{x,x-1\right\} \right\} }, \qquad
r_x^k(\omega) = \sum_{j=0}^{k-1} \mathbb{1}_{ \left\{  \left\{\omega_j, \omega_{j+1}\right\} =  \left\{x,x+1\right\} \right\} }   
,\]
where $l_x^k$ and $r_x^k$ are the local times by time $k$ for the undirected edges $\left\{x,x-1\right\}$ and $\left\{x,x+1\right\}$.
The dynamic of the walk $X$ under $(\mathbb{P}, \Omega)$ starting from $X_0 = 0$ is assumed to be given by the following conditional probabilities:

\begin{align}
	\mathbb{P}\left( X_{k+1} =  X_k+1 \middle| \mathcal{F}_k   \right) 
	&=1- \mathbb{P}\left( X_{k+1} =  X_k-1 \middle|  \mathcal{F}_k  \right)  
	\notag
	\\
	&=  \frac{  w(r_{X_k}^k)}{ w(l_{X_k}^k)  + w(r_{X_k}^k)   }
	, \label{eq: dynamic}
\end{align}
where $
w: \mathbb{N} \to  (0, \infty )
$ 
is a weight function.
We note that the walk defined as such is not Markov.
In this model, we require $w(.)$ to be monotone and converging to one, with polynomial asymptotic behavior:
\begin{equation}\label{eq: asymptotics of w}
	\frac{1}{w(n)} = 1 + \frac{2^p B}{n^p} + O\left(\frac{1}{n^{1+\mathcal{\kappa}}}\right) \quad \mbox{as $n\to \infty$} 	
\end{equation} 
for some $p \in (0,1]$, $\kappa>0$ and $B\in \mathbb{R}$. Under the monotonicity assumption of $w(.)$, $X_k$ is self-attracting if $w(.)$ is increasing and self-repelling if $w(. )$ is decreasing.


This type of model for SIRW was first introduced in \cite{T96} with different classes of asymptotic behaviors. ``Asymptotically free'' SIRWs, in B. T\'oth's original terminology, refers to those walks with weight functions of the type $\frac{1}{w(n)} = 1 + B n^{-1} + O(n^{-2})$. 
The current assumption \eqref{eq: asymptotics of w} on $w(n)$ is introduced in \cite{KMP23}, where the new parameter $p$ allows more general $w(.)$ and their results rely on the range of $p$. The interesting range of $p$ is $p \in (0,1]$, and $p > 1$ is absorbed into the error term in \eqref{eq: asymptotics of w}.
If instead of letting $w(n)$ converge to $1$, we let $w(n) \sim n^{-\alpha}$ for some $\alpha >0$, we obtain the ``polynomially self-repelling'' case also studied in \cite{T96}. 
We may view these two regimes as results of continuous variation of the parameter $\alpha$ in the general weight function $1/w(n) = n^{- \alpha}(1 + 2^p B n^{-p} + O(n^{-1-\kappa}))$.
There have been some works in characterizing the scaling limit of SIRWs in different regimes \cite{T96, KMP23}.

A key element in T\'oth's analysis of SIRW is the connection between SIRW and its local time, which is in spirit, similar to how the Ray-Knight theorem connects the local times of a Brownian motion with certain squared Bessel processes. 
This generalized Ray-Knight approach establishes convergence of local time processes of SIRWs, under scaling, to squared Bessel processes of appropriate dimensions. These Ray-Knight type results provide information about the scaling limits of the original SIRWs.
In \cite{T96}, Theorem~2A and 2B, T\'oth also proved local limit theorems for the position of a random walker in different SIRWs at independent geometric times with means of linear growth. 
These local limit theorems identified the diffusive limit of the endpoint of the SIRW as a Brownian motion perturbed at extrema (BMPE) (see Definition~\ref{defn:BMPE} below), provided the limit exists. 
The general question of whether there exist universal conditions that allow one to pass from Ray-Knight type results to a functional limit theorem was since open.
Based on these results, in \cite{KMP23}, Kosygina, Mountford and Peterson showed the process-level convergence of rescaled asymptotically free SIRWs to BMPEs under the assumption that $p > \frac{1}{2}$. Interestingly, they also constructed a counterexample to the functional convergence of rescaled SIRW in the polynomially self-repelling case, proving that there is no general thereom to pass directly from scaling limit of local time processes to functional convergence of the walk.
This non-convergence result also rejects the natural candidate, Brownian motion perturbed at extrema, as possible weak limits of the rescaled SIRWs, which is in contrast to those asymptotically free SIRWs with $p>1/2$. It's natural to ask whether the non-convergence result would occur for asymptotically free SIRWs when $p$ is close to $0$ and $w(.)$ is decreasing.
In the current work, we give an answer to this question by showing that the functional convergence of rescaled asymptotically free SIRWs to BMPEs holds for the full range of $p \in (0,1]$.




\subsection{Main Result}
We recall a definition for a Brownian motion perturbed at extrema, which is the scaling limit of $(X_k)_{k\geq 0}$.
\begin{definition}
	\label{defn:BMPE}
	Let $\theta^+, \theta^- \in (- \infty , 1)$. A Brownian motion perturbed at
	extrema (BMPE) $W^{\theta^+, \theta^-} = \left(W^{\theta^+, \theta^-}_t\right)_{t\geq 0}$ with parameter $(\theta^+, \theta^-)$ is the pathwise unique solution of the equation
	\[
	W_t = B_t \,+\, \theta^+ \sup_{s\leq t} W_s  \,+\, \theta^- \inf_{s\leq t} W_s \,,   \qquad t \ge 0, \quad W_0 = 0,
	\]
	where $B_t$ is a standard Brownian motion.
\end{definition}
It was shown in \cite{PW97, CD99} that if $\theta^+, \theta^- < 1$ then almost surely the functional equation above has a pathwise unique solution that is continuous, adapted to the Brownian filtration, and has Brownian scaling. 
Furthermore, the process triple 
\[
	\left(\inf_{s < t} W^{\theta^{+}, \theta^{-}}_s\!, \quad
	W^{\theta^{+}, \theta^{-}}_t\!, \quad
	\sup _{s<t} W^{\theta^{+}, \theta^{-}}_s\right)_{t \geq 0}
\]
is a strong Markov process (see Theorem~1 in \cite{PW97}).

Following the notation in \cite{T96}, we define
\[
U_1(n):=\sum_{j=0}^{n-1}(w(2 j))^{-1} \quad \text{and} \quad
V_1(n):=\sum_{j=0}^{n-1}(w(2 j+1))^{-1}
\]
and set
\begin{equation}
	\label{eq: gamma}
	\gamma:= \lim_{m\to \infty}\left( V_1(m) - U_1(m) \right) =\lim_{m\to \infty} \left( \sum_{j=0}^{m-1} \frac{1}{ w(2j+1)}-  \sum_{j=0}^{m-1}  \frac{1}{w(2j)} \right) 
	.\end{equation}
Motivation for these definitions will become apparent in the proof of Lemma~\ref{lm: convergence of mean of discrepancies}.
It is known that for monotone $w(.)$, $\gamma$
is well-defined and ${\gamma<1}$; in particular, $\gamma<1$ implies that the model is recurrent (\cite{KMP23}, p4 and \cite{T96}, p1340). 
 

	The following functional limit theorem is shown in \cite{KMP23}: 
	\begin{theorem}[\cite{KMP23}, Theorem~1.1]
		Let $w(.)$ be monotone and satisfy \eqref{eq: asymptotics of w} for $p\in (\frac{1}{2},1]$, and $\kappa >0 $. Consider the SIRW $(X_k)_{k\geq 0}$ defined in \eqref{eq: dynamic} with $X_0 =0$. We have the following process-level convergence
		\[
		\left(  \frac{X_{\lfloor nt \rfloor }}{\sqrt{n}}  \right)_{t\geq 0} \Longrightarrow \left( W^{\gamma,\gamma}_{t}\right)_{t\geq 0},
		\] 
		as $n$ goes to infinity, in the standard Skorohod topology on $D([0,\infty) ).$
	\end{theorem}
\begin{samepage}
	Our main result is a functional limit theorem in the case when $p\in (0,\frac{1}{2}]$.
	\begin{theorem}\label{th: main}
		Let $w(.)$ be monotone and satisfy \eqref{eq: asymptotics of w} for $p\in (0,\frac{1}{2}]$, and $\kappa >0 $. Consider the SIRW $(X_k)_{k\geq 0}$ defined in \eqref{eq: dynamic} with $X_0 =0$. We have the following process-level convergence
		\[
		\left(  \frac{X_{\lfloor nt \rfloor }}{\sqrt{n}}  \right)_{t\geq 0} \Longrightarrow \left( W^{\gamma,\gamma}_{t}\right)_{t\geq 0},
		\]
		as $n$ goes to infinity, in the standard Skorohod topology on $D([0,\infty) ).$
	\end{theorem}
\end{samepage}

This theorem states that the amount of perturbation is directly proportional to the signed range of the limiting process, i.e. $W_t = B_t + \gamma \left( \sup_{s \le t} W_t + \inf _{s \le t} W_s \right) $. 
One way to establish this is to approximate the drift encountered by the walk by $\gamma$ times the signed range of the walk, with an error of order $o\left(\sqrt{n} \right)$ (see Lemma~\ref{lm: control of acc drift} for the precise statement). 
The approximation is intuitive when the drift is experienced only at the extrema, for example, when $w(0) = (1 - \gamma)^{-1}$ and $w(n) = 1$ for $n > 0$, see \cite{Dav99} for a dicussion of this case. 
As our proof will show, this approximation remains true in general for $w(.)$ defined in \eqref{eq: asymptotics of w}, for all $p \in (0,1]$, because the average of local drifts converges to $\gamma$ fast enough and the walker makes enough visits to each site in its range (this will be made precise in Lemma~\ref{lm: convergence of mean of discrepancies} and Lemma~\ref{lm: number of rarely visit sites} below).
In particular, our proof of Theorem 1.3 can also recover the case when $p\in (1/2,1]$.

The functional convergence of rescaled SIRWs to BMPEs has also been shown for other non-Markov one-dimensional random walks,
such as once-reinforced random walks, excited random walks, and rotor walks with defects, see \cite{Dav96,Dav99,DK12,KP16,KMP22,HLSH18} and references therein. 
In particular, the excited random walks (ERWs) have interactions through local times of sites instead of local times of undirected edges, and their studies have been generalized to the context of random walks in random environment by allowing weight functions to be random and site-dependent, 
see \cite{KZ13, KMP22}.
Our result should be compared to the recurrent ERWs in the non-boundary case, see \cite{KP16,KMP23}, because the properties of walks are similar and the approaches are comparable. 
Those ERWs in the recurrent non-boundary case have diffusive scaling for their position and the local time processes; in particular, BMPEs and squared Bessel processes are only possible functional limits (as observed in \cite{T96}). In contrast, a different scaling limit under a non-diffusive scaling is expected when the walk is transient or in the boundary recurrent case. We also remark that $\gamma<1$ in our model implies that we are in the non-boundary recurrent case.

The analysis of local time processes of random walks via \textit{branching-like processes} (BLPs), taking advantage of the tree structure of the walk's excursions in one dimension, is core to the analysis of both SIRW (considered in this work as well as \cite{T96, KMP23}) and ERW (considered in \cite{DK12, KP16, KMP22}). 
One special property of SIRW is that the local behavior of the walk at different sites can be generated from independent \textit{generalized P\'olya's urn processes} assigned to each site.
Analysis of these two auxiliary processes is central in estimating the total drift attained by the SIRW in both \cite{KMP23} and this work.
In the case $p \in (\frac{1}{2}, 1]$, the total drift accumulated at any site $x$ turns out to be absolutely summable, which allows one to estimate the total drift easily (\cite{KMP23}, Lemmas~2.3-2.4).
The main technical difficulty in proving Theorem~\ref{th: main} where $p \le \frac{1}{2}$, is that we no longer have absolute summability of local drifts.
It turns out that we can estimate the accumulated drift without using absolute summability of the local drifts, if we stop the walk at particular excursion times and express the total attained drift as a spatial martingale adapted to the natural filtration of a branching-like process. After that, we control this martingale by considering a typical event on which the martingale has bounded increments.

\subsection{Organization of the Paper}
In section \ref{sec: proof of main}, we outline the proof of Theorem \ref{th: main} and reduce the proof into several technical lemmas. The proof of these lemmas are postponed to section \ref{sec: approximations} because they involve analysis of auxiliary processes. In section \ref{sec: generalized Polya Urn, BLP}, we describe the generalized P\'{o}lya urn processes and branching-like processes, discuss their connections to the SIRW $(X_k)_{k\geq 0}$, and recall some preliminary results from previous works which will be used in the proof of Theorem~\ref{th: main}. In section \ref{sec: approximations}, we prove the technical lemmas stated in section \ref{sec: proof of main} and thus finish the proof of Theorem~\ref{th: main}. 

\subsection{Notation}


	Here we collect some notation related to the SIRW $(X_k)_{k \ge 0}$.
	We denote by $\mathbb{P}$ the probability measure on $D([0,\infty))$ induced by $(X_k)_{k\geq 0}$, and by $\mathbb{E}$ the expectation with respect to the probability measure $\mathbb{P}$. Here we have identified the integer-valued process $(X_k)_{k \ge 0} \in (\mathbb{P}, \Omega)$ as defined in the introduction, with a real-valued process $(X_{\lfloor t \rfloor})_{t \ge 0}$ living in the Skorohod topology $D([0,\infty))$. 
\begin{enumerate}
	\item Running maximum and minimum: For the sequence $(X_k)_{k \ge 0}$, the running maximum at time $k$ is defined as $S_k = \sup\{X_i : i \le k\}$. Similarly, the running minimum at time $k$ is given by $I_k = \inf \left\{X_i: i \le k\right\}$.
	\item
	Local time at a site: For any site $x \in \mathbb{Z}$ and time $j \in \mathbb{N}_0$, the local time is defined as $L(x,n):= \sum_{i=0}^n \mathbb{1}_{\{X_i=x\}}$. 
	This definition, together with $\lambda_{x, m}$ defined below, are minor variations of those in \cite{KMP23} and \cite{KP16}, in that we include the last step $X_n$.
	\item
	Time of return to a site: For any non-negative integer $m$, the time of the $m$-th return (or the $(m+1)$-th visit) to a site $x$ is denoted by $\lambda_{x,m} = \inf\{t \geq 0: L(x,t) = m+1\}$. 
	We note that for $x\neq 0$, $\lambda_{x,0} > 0$. Also, every $\lambda_{x,m}<\infty$ because the walk is recurrent, see \cite{T96}, p1325.
	\item
	Local time of directed bonds: For a directed bond $(x,x+1)$ and time $j \in \mathbb{N}_0$, the local time along the directed bond $(x, x+1)$ at time $j$ is defined as $\mathcal{E}^j_{x,+} = \sum_{i=0}^{j-1} \mathbb{1}_{\{X_i=x, X_{i+1} =x+1\}}$. 
	Similarly, the local time along $(x,x-1)$ at time $j$ is $\mathcal{E}^j_{x,-} = \sum_{i=0}^{j-1} \mathbb{1}_{\{X_i=x, X_{i+1} =x-1\}}$.
	We denote by $\mathcal{E}^{(x,m)}_{y, -}$ the local time $\mathcal{E}^{\lambda_{x,m}}_{y,-}$, and by by $\mathcal{E}^{(x,m)}_{y, +}$ the local time $\mathcal{E}^{\lambda_{x,m}}_{y,+}$.
	\item
	Stopping times: For any stochastic process $(Y_k)_{k \in \mathbb{N}_0}$ we write $\tau^Y_{i} = \inf \{k \in  \mathbb{N}_0: Y_k \ge  i\}$.
\end{enumerate}
\section{Functional Limit Theorem for AF-SIRW: Proof of Theorem \ref{th: main}}
\label{sec: proof of main}

The proof of Theorem \ref{th: main} follows a classical strategy in obtaining functional limit theorems,
which has been applied to a number of excited random walk models, see \cite{KP16} and \cite{KMP23}.
We start by decomposing the random walk. Let
\begin{equation}
	\label{eq:decomposition}
	X_k = M_k+ \Gamma_k 
	,\end{equation} 
where
\[ 
\Gamma_0 = 0, \quad \Gamma_n = \sum_{i=0}^{n-1} \mathbb{E}\left[ X_{i+1}-X_i | \mathcal{F}_i
\right].
\]
The above decomposition gives a martingale $M_k$ with respect to $\mathcal{F}_k$.
Then we divide the proof into four main steps. 

\vspace{1em}

\textbf{Step 1: Control of the martingale term.}
By the martingale functional limit theorem (\cite{B99}, Theorem~18.2 ), the rescaled process $\left( \frac{M_{\left\lfloor n t \right\rfloor}}{\sqrt{n}} \right) _{t \ge 0}$ is tight in $D\left( [0,\infty ) \right) $ and converges to the standard Brownian motion, if we have the following control in the martingale's quadratic variation:
\begin{equation}\label{eq: QV term}
	\lim_{N\to \infty}\frac{1}{N} \sum_{k=0}^{N-1}\mathbb{E}\left[ (M_{k+1}- M_{k})^2 |\mathcal{F}_k \right] =1 \qquad  \mbox{ in probability}.
\end{equation}

Since $\abs{X_{k+1}-X_k}=1$, \eqref{eq: QV term} is implied by the following estimate:
\begin{lemma} \label{lm: control of martingale} 
	Let $p\in (0,\frac{1}{2}]$. Then, for any $\varepsilon >0$
	\begin{equation}
		\lim_{N \to \infty }\mathbb{P}\left(\frac{1}{N} \sum_{k = 0}^{N-1} \mathbb{E}\left[ X_{k+1} - X_k | \mathcal{F}_k \right]^2 > \varepsilon \right) =0. 
	\end{equation}
\end{lemma}
The proof of Lemma~\ref{lm: control of martingale} is given in section \ref{sec: approximations}.

\textbf{Step 2: Control of the accumulated drift.} This is the major technical step of our article. We want to approximate the accumulated drift $\Gamma_k$ by a linear combination of the running maximum and minimum of the walk.
\begin{lemma}\label{lm: control of acc drift}
	Let $p\in (0,\frac{1}{2}]$. Then, for any $t>0$ and $\varepsilon >0$
	\begin{equation}
		\lim_{n \to \infty }\mathbb{P}\left(\sup_{k\leq \lfloor nt\rfloor} \abs{\Gamma_k - \gamma \left(S_k + I_k \right)   } > \varepsilon \sqrt{n}  \right) =0. 
	\end{equation}
\end{lemma}

To prove Lemma~\ref{lm: control of acc drift}, we will consider the description of the local time process of $(X_k)_{k \ge 0}$ at excursion times $\lambda_{x,m}$ in terms of spatially Markov processes, namely the branching-like processes (BLPs) $\left( \mathcal{E}^{(x,m)}_{z,+} \right)_{x \le z \leq y}$, to be discussed in section \ref{sec: generalized Polya Urn, BLP}. This motivates considering the \textit{local drift} attained at a single site $y$ at the stopping times $\lambda_{x, m} = k$, for various possible values of $(x,m)$:
\begin{equation}\label{eq: accumulated local drift}
	\Delta_y^{(x,m)}:= \sum_{i=0}^{\lambda_{x,m}-1} \mathbb{E}\left[X_{i+1}-X_i\vert \mathcal{F}_{i}\right] \mathbb{1}_{\left\{X_i=y\right\}}
\end{equation}
On the event that $\lambda_{x,m} = k$, the drift term $\Gamma_k$ is by definition a sum of local drifts:
\begin{equation}\label{eq: drift in terms of local drifts}
	\Gamma_k = \sum_{y\in \mathbb{Z}} \Delta_y^{(x,m)}
	.\end{equation}
We decompose the accumulated drift $\Gamma_k$ into three parts: $\Gamma_k = 	\Gamma_k^+ +	\Gamma_k^0 + \Gamma_k^-$, where 
\[
\Gamma_k^{+} = \sum_{y > x} \Delta_y^{(x,m)}\qquad 
\Gamma_k^{0} = \Delta_x^{(x,m)} \qquad
\Gamma_k^{-} = \sum_{y < x} \Delta_y^{(x,m)}
.\]
We will approximate $\Delta_y^{(x,m)}$ by $\gamma\cdot sgn(y)$, and thus we will approximate $\Gamma_k^+$ by $\gamma \cdot (S_k - \abs{X_k})$ and $\Gamma_k^-$ by $ \gamma \cdot (I_k + \abs{X_k} )$
.

For each $k\leq \lfloor nt\rfloor$, we can let $x = X_k$ and $ m +1=L(x,k)$ so that $\lambda_{x,m} = k$. From the decomposition \eqref{eq: drift in terms of local drifts} of $\Gamma_k$, we see that
Lemma \ref{lm: control of acc drift} will follow if we can prove
\begin{equation}\label{eq: control of acc drift + }
	\lim_{n \to \infty }\mathbb{P}\left(\sup_{k\leq\lfloor nt \rfloor} \abs{\Gamma^+_k - \gamma \cdot \left(S_k - \abs{X_k} \right)   } > \varepsilon \sqrt{n}  \right) =0, 
\end{equation}
and similar results for $\Gamma_k^0$ and $\Gamma_k^-$.
By reflection symmetry about site $0$ ($x \mapsto -x$), the result for $\Gamma_k^-$ is proved identically as that of $\Gamma_k^+$, and is thus omitted. 
$\Gamma_k^0$ will be dealt with when we prove the result for $\Gamma_k^+$.
Next, we note that \eqref{eq: control of acc drift + } is equivalent to
\begin{equation}\label{eq: control of acc drift ++}
	\lim_{n \to \infty }\mathbb{P}\left(\sup_{k\leq\lfloor nt \rfloor} \abs{\sum_{y> X_k} \left( \Delta_{y}^{(X_k,L(X_k,k) - 1)} - \gamma  \cdot sgn(y) \right)   }  > \varepsilon \sqrt{n}  \right) =0. 
\end{equation}

To show \eqref{eq: control of acc drift ++}, we further replace $\Delta_{y}^{(x,m)}$ by its conditional expectation with respect to the filtration $\left(\mathcal{G}_{y}^{(x,m)}\right)_{y\geq x}$ generated by directed edge local times, i.e. BLPs, $ \mathcal{G}_{y}^{(x,m)} = \sigma\left( \mathcal{E}^{(x,m)}_{z,+} : x \le z \leq y \right)$.
Define
\begin{equation}\label{eq: conditional mean}
	\rho_{y}^{(x,m)}= \mathbb{E}\left[\Delta_y^{(x,m)} | \mathcal{G}_{y-1}^{(x,m)}\right],
\end{equation}
then \eqref{eq: control of acc drift + } follows from the following two lemmas.
\begin{lemma}\label{lm: approximation of means of local drift}
	Let $p\in (0,\frac{1}{2}]$. Then, for any $t>0$ and any $\varepsilon >0$
	\begin{equation}
		\lim_{n \to \infty }\mathbb{P}\left(\sup_{k\leq\lfloor nt \rfloor} \abs{\sum_{y> X_k} \left( \rho_{y}^{(X_k,L(X_k,k)-1)} - \gamma  \cdot sgn(y) \right)   }  > \varepsilon \sqrt{n}  \right) =0. 
	\end{equation}
\end{lemma}

\begin{lemma}\label{lm: approx local drift by conditional means}
	Let $p\in (0,\frac{1}{2}]$. Then, for any $t>0$ and any $\varepsilon >0$
	\begin{equation}
		\lim_{n \to \infty }\mathbb{P}\left(\sup_{k\leq\lfloor nt \rfloor} \abs{\sum_{y> X_k} \left(\Delta_{y}^{(X_k,L(X_k,k)-1)}- \rho_{y}^{(X_k,L(X_k,k)-1)} \right)   }  > \varepsilon \sqrt{n}  \right) =0. 
	\end{equation}
\end{lemma}



To prove Lemmas \ref{lm: approximation of means of local drift} and \ref{lm: approx local drift by conditional means}, we will introduce the auxiliary processes associated to the SIRW $(X_k)_{k\geq 0}$: the generalized P\'{o}lya urn process, and the branching-like processes. Both processes are constructed from path of $X_k$ up to certain stopping times, and they are used in approximating $\Delta_{y}^{(x,m)}$.
It is natural to view these two processes and conditional expectations with respect to them as ``mesoscopic quantities'', in between $\Delta_y$ and $\gamma\cdot sgn(y)$.
A major challenge in our work is to understand how fluctuations around the conditional expectations vanish in terms of Lemmas~\ref{lm: approximation of means of local drift} and \ref{lm: approx local drift by conditional means}.
To address this challenge, we work on certain good events to be defined in section 4.1.
On one hand, the good events $A_{n, K}^c \cap B_{n, M}^c$, dealing with upper and lower control of (directed) local times, used in the proof of Lemma~\ref{lm: approximation of means of local drift}, are essentially known from \cite{KMP23}.
On the other hand, the good events $G_{n, K, t}$ and $G_{n, K}^{(x, m)}(y)$ for Lemma~\ref{lm: approx local drift by conditional means} require a new construction, and a finer analysis of edge local times and underlying generalized P\'olya’s Urn processes, which is the major novelty of this article. 
Lemmas~\ref{lm: approximation of means of local drift} and \ref{lm: approx local drift by conditional means} together with the constructions of good events are proved in section \ref{sec: approximations}.

\vspace{1em}

\textbf{Step 3: Tightness.} The tightness of $S_k$ and $I_k$ under diffusive scaling for general $p \in (0,1]$ is already established in \cite{KMP23}:
\begin{proposition}
	(\cite{KMP23}, Proposition 2.1)\\
	\label{prop: tightness}
	The scaled extremal processes $\left\{\frac{S_{\left\lfloor n t \right\rfloor}}{\sqrt{n}}\right\}_{n \geq 0}$ and $\left\{\frac{I_{\lfloor n t \rfloor}}{\sqrt{n}}\right\}_{n \geq 0}$ are tight in the standard Skorohod topology on $D([0, \infty))$.
\end{proposition}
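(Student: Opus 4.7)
The plan is to exploit two structural features of the problem: $(S_k)$ and $(I_k)$ are monotone with unit-size jumps, and their marginals are controlled by hitting times that admit a Ray--Knight-type description via branching-like processes (BLPs). By the symmetry of the dynamics in \eqref{eq: dynamic} under $x\mapsto -x$, I would only treat $(S_k)$; the argument for $(I_k)$ is identical. Since $k\mapsto S_k$ is nondecreasing with $\abs{S_{k+1}-S_k}\le 1$, the rescaled process $t\mapsto S_{\lfloor nt\rfloor}/\sqrt n$ has jumps of size at most $1/\sqrt n\to 0$. For nondecreasing càdlàg processes with vanishing maximum jump size, tightness in $D([0,\infty))$ reduces to tightness of the one-dimensional marginal at each fixed $t>0$, and any subsequential weak limit is continuous (cf.\ Theorem~13.4 of \cite{B99}). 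Since $S_k\ge 0$, only the upper tail of $S_{\lfloor nt\rfloor}/\sqrt n$ needs to be controlled.

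Next, I would convert this upper-tail bound into a lower bound on hitting times. Using the duality $\{S_{\lfloor nt\rfloor}\ge h\sqrt n\}=\{\tau^X_{\lceil h\sqrt n\rceil}\le \lfloor nt\rfloor\}$, marginal tightness is equivalent to
\[
\lim_{h\to\infty}\limsup_{n\to\infty}\mathbb{P}\!\left(\tau^X_{\lceil h\sqrt n\rceil}\le nt\right)=0.
\]
The edge-accounting identity $\tau^X_k=k+2\sum_{y\in\mathbb Z}\mathcal E^{\tau^X_k}_{y,-}$ (each leftward step must be followed by a matching rightward step before the walk reaches $k$, on top of $k$ net rightward steps) further reduces the matter to showing that the total leftward-edge local time accumulated up to $\tau^X_{\lceil h\sqrt n\rceil}$ grows at least linearly in $n$ with probability going to $1$ as $h\to\infty$.

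Finally, I would invoke the BLP description of directed edge local times at the stopping time $\tau^X_k$ (to be set up in Section~\ref{sec: generalized Polya Urn, BLP}): the sequence $(\mathcal E^{\tau^X_k}_{k-j,+})_{j\ge 1}$ is a spatial Markov chain started at $1$ whose transitions are driven by the generalized Pólya urns attached to each site, and an analogous BLP on the negative side encodes excursions below $0$. Under the diffusive rescaling $k=\lceil h\sqrt n\rceil$, these BLPs converge in the Ray--Knight framework to squared Bessel-type diffusions $Z$, so that
\[
\frac{1}{n}\sum_{y\in\mathbb Z}\mathcal E^{\tau^X_k}_{y,-}\ \Longrightarrow\ \int_0^h Z_s\,ds,
\]
and the right-hand side diverges as $h\to\infty$. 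This yields the required hitting-time estimate and hence the tightness. The main obstacle is precisely the diffusion approximation of the BLP, uniformly over its diffusive time horizon: this is the technical heart of \cite{T96,KMP23}, and in the present plan it enters in essentially black-box form through the material recalled in Section~\ref{sec: generalized Polya Urn, BLP}; everything else is soft reduction via monotonicity and edge-accounting.
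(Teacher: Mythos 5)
First, a structural note: the paper does not prove Proposition~\ref{prop: tightness}; it imports it verbatim from \cite{KMP23}, so there is no in-paper proof to compare against, and your sketch must stand on its own. The soft reductions you set up are mostly sound and in the right spirit: symmetry lets you treat $S$ only, the edge-accounting identity $\tau^X_k=k+2\sum_{y\in\mathbb{Z}}\mathcal{E}^{\tau^X_k}_{y,-}$ is correct, and the BLP diffusion approximation from Section~\ref{sec: generalized Polya Urn, BLP} (Lemma~\ref{lm: diffusion approximation of blp}) is indeed the engine behind this proposition in \cite{T96,KMP23}. However, the very first reduction is wrong and it is a genuine gap: for nondecreasing c\`adl\`ag processes whose maximal jump vanishes, tightness in $D([0,\infty))$ does \emph{not} reduce to tightness of the one-dimensional marginals. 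Take nondecreasing $Y_n$ that are $0$ on $[0,1/2)$, equal to $1$ on $[1/2+n^{-2},\infty)$, and climb from $0$ to $1$ through $n$ jumps of size $1/n$ in $[1/2,1/2+n^{-2}]$: all marginals are in $[0,1]$ and the largest jump is $1/n\to 0$, yet every fixed-$\delta$ Skorokhod modulus stays near $1$, so $\{Y_n\}$ is not tight in $D$. Vanishing jumps only tell you that \emph{if} the family is tight then subsequential limits are continuous; the tightness criterion (Billingsley's Theorems~13.2--13.5) still requires an estimate on the oscillation over short time windows, uniformly in the window location, in addition to compact containment.

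Supplying that oscillation estimate is exactly where the non-Markov nature of \eqref{eq: dynamic} bites and where your plan is silent. You would need something like $\lim_{\delta\to 0}\limsup_n\mathbb{P}\bigl(\sup_{s\le T}\frac{1}{\sqrt n}(S_{\lfloor n(s+\delta)\rfloor}-S_{\lfloor ns\rfloor})>\varepsilon\bigr)=0$, and you cannot get it by restarting your hitting-time argument at time $\lfloor ns\rfloor$, because the transition probabilities depend on the entire accumulated edge local-time profile, which at a generic intermediate time is not the boundary data from which the BLPs in Section~\ref{sec: generalized Polya Urn, BLP} are launched. The proof in \cite{KMP23} circumvents this by passing to the excursion times $\lambda_{x,m}$, where the local-time profile is exactly a BLP path, and using the functional (process-level) BLP convergence of Lemma~\ref{lm: diffusion approximation of blp} to control the whole profile at once, from which $D$-tightness of the running extrema is deduced. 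As written, your argument delivers only compact containment of $S_{\lfloor nt\rfloor}/\sqrt n$, which is strictly weaker than what Proposition~\ref{prop: tightness} asserts and is insufficient for the use made of it in Steps~3--4 of the proof of Theorem~\ref{th: main}.
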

This result combined with Lemma~\ref{lm: control of acc drift} gives tightness of $\left(\frac{\Gamma_{\left\lfloor nt  \right\rfloor}}{\sqrt{n} }\right)_{t \ge 0}$, and hence of $\left(\frac{X_{\left\lfloor nt  \right\rfloor}}{\sqrt{n} }\right)_{t \ge 0}$. 
Now we are ready to show
\vspace{1em}

\textbf{Step 4: Convergence to BMPE.} 
From Proposition~\ref{prop: tightness} and Lemmas \ref{lm: control of martingale} and \ref{lm: control of acc drift} we can conclude that the process triple $\frac{1}{\sqrt{n}}\left(X_{\lfloor n t\rfloor}, M_{\lfloor n t\rfloor}, \Gamma_{\lfloor n t\rfloor}\right)_{t \geq 0}$ is tight in the space $D([0, \infty))^3$ and that any subsequential limit $\left(Y_1(t), Y_2(t), Y_3(t)\right)_{t \geq 0}$ is a continuous process such that $Y_2$ is a standard Brownian motion, $Y_3(t)=$ $\gamma\left(\sup _{s \leq t} Y_1(s)+\inf _{s \leq t} Y_1(s)\right)$ for all $t \geq 0, \mathbb{P}$-a.s., and $Y_1(t)=Y_2(t)+Y_3(t)$. By uniqueness of functional solution for BMPE, $\frac{1}{\sqrt{n} } X_{\left\lfloor nt  \right\rfloor}$ converges to a $(\gamma, \gamma)$-BMPE.

\section{Generalized P\'{o}lya Urn, Branching-Like Processes}\label{sec: generalized Polya Urn, BLP}

In this section, we describe two auxiliary processes: the generalized P\'{o}lya urn processes and the branching-like processes (BLPs). As explained earlier in section \ref{sec: proof of main}, these two auxiliary processes are essential in estimating $\Gamma_k^+= \sum_{y\geq X_k} \Delta_{y}^{(x,m)}$ on the event $\{\lambda_{x,m} = k\}$. We will show in subsection \ref{subsec: measurability} that $\Delta^{(x,m)}_{y}$ is itself not measurale with respect to the natural filtrations of BLPs, $\mathcal{G}$, yet its conditional expectation $\rho^{(x,m)}_{y}$ only depends on the generalized P\'{o}lya urn processes associated to site $y$. Therefore, to approximate $\Gamma_k$ and $\Delta_{y}^{(x,m)}$ we need to consider a larger spatial filtration $\mathcal{H}$, in which $\Delta^{(x,m)}_{y}$ becomes a Markov process with some convenient properties. In the last subsection, we recall some properties of the two processes needed in the proofs of Lemma \ref{lm: control of martingale}, \ref{lm: approximation of means of local drift}, and \ref{lm: approx local drift by conditional means}. Most of the these properties are well-known from works such as \cite{T96}, \cite{KP16} and \cite{KMP23}. 


\subsection{Generalized P\'{o}lya Urn}
Given a (recurrent) SIRW $(X_k)_{k\geq 0}$, and a fixed site $y\in \mathbb{Z}$, we can obtain a Markov process by considering only the up-crossings and down-crossings of $X_k$ from site $y$. More precisely, we first let $\lambda_{y,i}$ be the stopping time when $X_k$ visit site $y$ for the $\left( i+1 \right) $-th time:
\[
\lambda_{y,0} :=\inf\left\{ t\geq 0: X_t = y \right\} , \quad \lambda_{y,i+1} := \inf\left\{ t> \lambda_{y, i}: X_t = y \right\},
\] 
and then define the \textit{generalized P\'olya urn process} at site $y$ as 
\begin{equation} \label{eq: RW to GPU}
	\left(\mathscr{B}^{(y)}_{i},\mathscr{R}^{(y)}_{i} \right)_{i\ge 0}
	:=\left(\mathcal{E}^{\lambda_{y,i}}_{y,-}, \mathcal{E}^{\lambda_{y,i}}_{y,+}\right)_{i\geq 0} 
	=  \left(\mathcal{E}^{(y,i)}_{y,-}, \mathcal{E}^{(y,i)}_{y,+}\right)_{i\geq 0}.
\end{equation}
This is a Markov process with an initial value $(0,0)$. 
If we view this process as generated by repeatedly drawing blue and red balls, then
$\left(\mathscr{B}_{i}^{(y)},\mathscr{R}_{i}^{(y)} \right)$ is the state of a generalized P\'olya urn, after the $i$-th draw made from the urn, with $\mathscr{B}_{i}^{(y)}$ blue balls and $\mathscr{R}_{i}^{(y)}$ red balls. 
This explains our choice of defining $\lambda_{y, 0}$ as the first time the walk reaches $y$: it is right before the first draw is made from the urn.

Due to the initial value of the underlying random walk $X_0=0$, the local times of undirected edges $\left\{y,y-1\right\}$ and $\left\{y,y+1\right\}$ at time $\lambda_{y,0}$ are 
(recall definition from beginning of section~1):
\begin{equation}\label{eq: initial condition}
	\left(l_y^{\lambda_{y,0}},  r_y^{\lambda_{y,0}}\right) =  \begin{cases}	
		(1, 0) &,  \text{ if }  y>0 \\
		(0, 1) &,  \text{ if }  y<0 \\  
		(0, 0) &,  \text{ if }  y=0 \\
	\end{cases} 
	.\end{equation}	
Therefore, the transition probabilities of $\left(\mathscr{B}_{i}^{(y)},\mathscr{R}_{i}^{(y)} \right)$ defined in \eqref{eq: RW to GPU} are:
\begin{align*}\label{eq: transition prob for GPU}
	\mathbb{P} \left(\left(\mathscr{B}^{(y)}_{k+1},\mathscr{R}^{(y)}_{k+1} \right)=  (i+1,j) \vert \left(\mathscr{B}^{(y)}_{k},\mathscr{R}^{(y)}_{k}\right) =(i,j)  \right) &= \frac{b_y(i)}{b_y(i)+r_y(j)}, \mbox{ and}  \\
	\mathbb{P} \left( \left(\mathscr{B}^{(y)}_{k+1},\mathscr{R}^{(y)}_{k+1}\right)=  (i,j+1) \vert \left(\mathscr{B}^{(y)}_{k},\mathscr{R}^{(y)}_{k}\right) =(i,j)  \right) &= \frac{r_y(j)}{b_y(i)+r_y(j)},
\end{align*} 
where the weights $(b_y(k),r_y(k))_{k\geq 0}$ depend on $w(.)$ and $y$ as follows:
\begin{equation}\label{eq: generalized weights}
	(b_y(k), r_y(k)) = \begin{cases}
		(w(2k+1), w(2k)) &,  \text{ if }  y>0 \\
		(w(2k), w(2k+1)) &,  \text{ if }  y<0 \\  
		(w(2k), w(2k)) &,  \text{ if }  y=0 \\ 
	\end{cases}.
\end{equation}

For a generalized P\'{o}lya urn process $\left(\mathscr{B}^{(y)}_k,\mathscr{R}^{(y)}_k \right)_{k\geq 0}$ associated to the site $y$, we define the \textit{signed difference process} $\left\{\mathscr{D}^{(y)}_{k}\right\}_{k \ge 0} $ to be
\begin{equation}\label{eq:signed difference}
	\mathscr{D}^{(y)}_k  =\mathscr{R}^{(y)}_k -\mathscr{B}^{(y)}_k.  
\end{equation}
Let $\left(\mathcal{F}^{\mathscr{B},\mathscr{R}}_{k, y}\right)_{k \ge 0}$ be the natural filtration associated to the urn process $\left(\mathscr{B}^{(y)}_k,\mathscr{R}^{(y)}_k \right)_{k\geq 0}$:  
\[
\mathcal{F}^{\mathscr{B},\mathscr{R}}_{k, y} = \sigma\left( \left(\mathscr{B}_j^{(y)},\mathscr{R}_j^{(y)} \right): j\leq k \right).
\]  
For simplicity of notation, we may write $\tau_{k,y}^{\mathscr{B}}$ in place of $\tau_k^{\mathscr{B}^{(y)}}$,  $\tau_{k,y}^{\mathscr{R}}$ in place of $\tau_k^{\mathscr{R}^{(y)}}$, and $\mathcal{F}^{\mathscr{B},\mathscr{R}}_{k, y}$ in place of $\mathcal{F}^{\mathscr{B}^{(y)},\mathscr{R}^{(y)}}_{k}$. We may also drop the $y$ when there is no ambiguity.
\begin{remark}
	\label{rm:symmetry}
	By symmetry considerations (recall $X_0 = 0$), for all $y \in \mathbb{Z}$, the generalized P\'{o}lya urn processes at sites $y$ and $-y$ are symmetric in the sense that
	\[\left(\mathscr{B}^{(y)}_{k},\mathscr{R}^{(y)}_{k} \right)_{k\ge 0}
	\overset{d}{=} 
	\left(\mathscr{R}^{(-y)}_{k},\mathscr{B}^{(-y)}_{k} \right)_{k\ge 0} 
	.\]

	Moreover, for all $x, m \in \mathbb{Z}$ such that $m\geq 0$, we have
	\[
	\left(\mathcal{E}^{(x,m)}_{x+k,+} \right)_{k\geq 0} \overset{d}{=} \left(\mathcal{E}^{(-x,m)}_{-x-k,-} \right)_{k\geq 0}.
	\]
\end{remark}
\begin{remark}
	\label{rk:UrnGeo}
	Since the weight function $w(.)$ is positive and bounded away from zero and infinity, the probability of the next ball drawn being blue is bounded below by a constant $q > 0$ and above by a constant $q' < 1$. Therefore, for each site $y$ and for all $i \ge 0$, the difference $\tau_{i+1}^{\mathscr{B}} - \tau_{i}^{\mathscr{B}}$ is stochastically bounded above by a geometric random variable with parameter $q$ and below by a geometric random variable with parameter $q'$.
	In particular, each color is drawn from the urn infinitely often almost surely, which is also a consequence of recurrence of $X_k$.
\end{remark}

\subsection{Branching-Like Processes}
Having understood the walk in terms of the urn processes at a fixed site, we now characterize how urn processes at different sites are related. Unlike the generalized P\'{o}lya urn processes $(\mathscr{B}_k,\mathscr{R}_k )_{k \ge 0}$ that are Markov processes in time, the branching-like processes, describing the local times of directed edges of $(X_k)_{k\geq 0}$ at a fixed stopping time, are Markov processes in space.
More precisely, for any integers $x,m$ with $m\geq 0$, the local times at the stopping time $\lambda_{x,m}$, 
\[
\left(\mathcal{E}^{(x,m)}_{x+k,+} \right)_{k\geq 0}, \quad \left(\mathcal{E}^{(x,m)}_{x-k,-} \right)_{k\geq 0}
\]
are two Markov processes on $\mathbb{N}\cup\left\{0\right\}$. Their transition probabilities are related to the generalized weights in \eqref{eq: generalized weights}, see \eqref{eq: transition prob on positive} and \eqref{eq: transition prob on negative} below. The derivation is known from several earlier works, such as \cite{T96, KP16}. We state some facts in the derivation of \eqref{eq: transition prob on positive} and \eqref{eq: transition prob on negative} , which are also used in the next subsection.

Due to Remark~\ref{rm:symmetry}, its natural to assume that $x\ge 0$ and $m \ge 0$. We define the \textit{branching-like processes} to be
\[
\tilde{\zeta} := \left(\mathcal{E}^{(x,m)}_{x+k,+} \right)_{k\geq 0}, \quad
\zeta := \left(\mathcal{E}^{(x,m)}_{x-k,-} \right)_{k\geq 0}
.\]
In particular, $\tilde{\zeta}$ is a homogeneous Markov chain, while $\zeta$ is inhomogeneous. We have the following properties for $\tilde{\zeta}$:
\begin{enumerate}
	\item The sequences $(\tau^{\mathscr{B}}_{k,y})_{k\geq 0} $ are independent in $y \geq x$, and each sequence $\left(\tau^{\mathscr{B}}_{k,y}\right)_{k\geq 0} $ is Markov in $k$. Then, the collection of stopping times
	\begin{equation}\label{eq: markov 1} 
		\left\{\tau^{\mathscr{B}}_{k,y}: y\geq x, k\geq 0 \right\} \mbox{are Markov in $(y,k)$}
	\end{equation}
	under the lexicographical order (which is a total order on any subset of $\mathbb{Z}^2$): 
	\begin{equation*}\label{eq: lexicographical order}
		(y,k) \preceq (y',k')  \mbox{ if and only if }
		k \leq k'   \mbox{ when $y' = y$, or } 
		y <y'. 
	\end{equation*} 
	
	\item  For any $y\geq x$, 
	$\mathcal{E}^{(x,m)}_{y+1,+}$ is a function of $\{\tau^{\mathscr{B}}_{k,y+1}: k\leq \mathcal{E}^{(x,m)}_{y+1,-}\}$,
	\begin{equation} \label{eq: recursive formula for upcrossings}
		\mathcal{E}_{y+1,+}^{(x,m)}	=  \sum_{k= 0 }^{\mathcal{E}_{y+1,-}^{(x,m)}-1}	\left(\tau^{\mathscr{B}}_{k+1,y+1}-\tau^{\mathscr{B}}_{k,y+1}-1 \right) = \tau^{\mathscr{B}}_{ L,y } - L = \mathscr{R}^{(y + 1)}_{\tau^{\mathscr{B}}_{ L,y+1 }},
	\end{equation}
	where $L = \mathcal{E}_{y+1,-}^{(x,m)}$.
	
	\item The directed edge local times at two consecutive sites satisfy:
	\begin{equation}\label{eq: source of inhomogeneity}
		\mathcal{E}_{y-1,+}^{(x,m)} = \mathcal{E}_{y,-}^{(x,m)} + \mathbb{1}_{ \left\{ 1\leq y \leq x \right\} }
		\quad \text{for all }y\in \mathbb{Z}.
	\end{equation}
	
	\item  From \eqref{eq: markov 1}, \eqref{eq: recursive formula for upcrossings} and \eqref{eq: source of inhomogeneity}, the transition probabilities for $\tilde{\zeta}$ are 
	\begin{equation}\label{eq: transition prob on positive}
		\mathbb{P}\left(\tilde{\zeta}_{k+1}=j \vert \tilde{\zeta}_k =i  \right) = 
		\mathbb{P}\left( \mathscr{R}_{\tau_{i,1}^{\mathscr{B}}} = j \right), \mbox{for any $i,j\geq 0$, } 
	\end{equation} 
	where the urn processes are located at $y = 1$ (or equivalently, any site $y>0$).
\end{enumerate}

For $\zeta= \left(\mathcal{E}^{(x,m)}_{x-k,-} \right)_{k\geq 0}$, we get similar statements using Remark~\ref{rm:symmetry}, by reversing the roles of $+$ and $-$, as well as $\mathscr{B}$ and $\mathscr{R}$. $\zeta$ is inhomogeneous because of \eqref{eq: source of inhomogeneity}, and we have the transition probabilities depending on the sign of $x-k$: for $i,j\geq 0$
\begin{equation}\label{eq: transition prob on negative}
	\mathbb{P}\left(\zeta_{k+1}=j \vert \zeta_k =i  \right) = 
	\begin{cases}
		\mathbb{P}\left( \mathscr{B}_{\tau_{i+1,1}^{\mathscr{R}}} = j \right) ,& \mbox{ if $0 \leq k <  x-1$ }
		\\
		\mathbb{P}\left( \mathscr{B}_{\tau_{i+1,0}^{\mathscr{R}}} = j \right) ,& \mbox{ if $k =  x-1$, }
		\\
		\mathbb{P}\left( \mathscr{B}_{\tau_{i,-1}^{\mathscr{R}}} = j \right) ,& \mbox{ if $k \geq x$ }
	\end{cases}
\end{equation}
where the urn processes $\mathscr{B}, \mathscr{R}$ are located at $y = 1, y=0$ and $y = -1$ respectively. We note that \eqref{eq: transition prob on positive} and \eqref{eq: transition prob on negative} agree for $k \ge x$.

\subsection{Filtrations of BLPs, and Approximation of Accumulated Drifts}\label{subsec: measurability}

$\Delta^{(x,m)}_{y}$ and $\rho^{(x,m)}_{y}$ are naturally adapted to two types of filtrations. The first type of filtrations are the filtrations associated to the BLPs $\tilde{\zeta}$ and $\zeta$: 
$$\mathcal{G}_{y, +}^{(x,m)}:=\sigma\left(\mathcal{E}^{(x,m)}_{z, +}: x \le z \le y\right) $$ for $y \ge x$ and $$\mathcal{G}_{y, -}^{(x,m)}:=\sigma\left(\mathcal{E}^{(x,m)}_{z, -}: y \le z \le x\right) $$ for $y \le x$.
The second type of filtrations contain additional information of all arrival times $\tau^\mathscr{B}_{k,z}$, for all $k\leq \mathcal{E}^{(x,m)}_{z, -}$ and $x\leq z \leq y$.
For $y \ge x$, we define
\[
\mathcal{H}_{y, +}^{(x,m)} = \sigma\left( \mathcal{E}_{z, -}^{(x,m)}, \tau_{k, z}^\mathscr{B}\cdot \mathbb{1}_{\left\{ k\leq \mathcal{E}_{z, -}^{(x,m)} \right\}} : x \leq  z \leq y,  k \geq 0 \right) 
.\]
$\mathcal{H}_{y, -}^{(x,m)}$ is defined similarly for $y\leq x$.
In particular, $\mathcal{H}_{y, +}^{(x,m)}$ is finer than $\mathcal{G}_{y, +}^{(x,m)}$, since $\mathcal{E}_{z, +}^{(x,m)}$ is $\mathcal{H}_{y, +}^{(x,m)}$- measurable for any $z$ in $[x,y]$, by \eqref{eq: recursive formula for upcrossings} and \eqref{eq: source of inhomogeneity}. Similarly, $\mathcal{E}_{y, -}^{(x,m)}$ is $\mathcal{H}_{y, -}^{(x,m)}$- measurable for $ y\leq x$, so $\mathcal{H}_{y, -}^{(x,m)}$ is finer than $\mathcal{G}_{y, -}^{(x,m)}$. 

The following lemma exhibits two identities involving $\Delta^{(x,m)}_{y}$ and $\rho_{y}^{(x,m)}$. These two identities imply that $\Delta^{(x,m)}_{y}$ is $\mathcal{H}_{y, +}^{(x,m)}$- measurable but not $\mathcal{G}_{y, +}^{(x,m)}$- measurable. 
Identity \eqref{eq: cummulated drift at a site} will also be useful when we estimate $\Delta_y^{(x,m)}$ in proofs of Lemmas \ref{lm:lipchitz-bound-on-good-event} and \ref{lm: control of martingale}.
\begin{lemma}\label{lm: identities for Del, rho} 
	For any integers $m,y\geq x$ with $m\geq 0$, the conditional expectation $\Delta^{(x,m)}_{y}$ depends only on $ \mathcal{E}_{y-1,+}^{(x,m)} = \mathcal{E}_{y,-}^{(x,m)} + \mathbb{1}_{ \left\{ 1\leq y \leq x \right\} }$ and $ \tau^{\mathscr{B}}_{l,y}$ for $l\leq \mathcal{E}^{(x,m)}_{y,-} $,
	\begin{equation} \label{eq: cummulated drift at a site}
		\Delta_{y}^{(x,m)} = \sum_{l=0 }^{ \mathcal{E}^{(x,m)}_{y,-} -1  } g\left(\tau^{\mathscr{B}}_{l,y},\tau^{\mathscr{B}}_{l+1,y},l,y \right),
	\end{equation}	
	where $g(A,B,l,y)$ depends only on $A,B,l$ and the sign of $y$:
	\begin{equation}\label{eq: cummulated drift at a site2}
		g\left(\tau^{\mathscr{B}}_{l,y},\tau^{\mathscr{B}}_{l+1,y},l,y\right)
		= \sum_{j=\tau^{\mathscr{B}}_{l,y}}^{\tau^{\mathscr{B}}_{l+1,y}-1} \frac{ r_y( j-l) - b_y(l)  }{ r_y( j-l) + b_y(l)  }.
	\end{equation}
	Moreover, on the event that $\mathcal{E}^{(x,m)}_{y-1,+}  = k + \mathbb{1}_{\left\{1\leq y\leq x\right\}}$,
	\begin{equation} \label{eq: conditional mean in GPU represenetation}
		\rho_{y}^{(x,m)} = \mathbb{E}\left[ \Delta_{y}^{(x,m)}\vert \mathcal{E}^{(x,m)}_{y-1,+} \right]	  
		= \mathbb{E}\left[  \mathscr{D}_{\tau^{\mathscr{B}}_{k,y}} \right].
	\end{equation}
	Recall \eqref{eq:signed difference} for the definition of $\mathscr{D}^{(y)}_k$.	
\end{lemma}
\begin{proof} 
	The first identity is similar to \eqref{eq: recursive formula for upcrossings}.
	For any $y>x $, at time $\lambda_{x,m}$, the last jump from site $y$ is a left jump. For any integers $k\geq 0$, the event 
	$\left\{ \mathcal{E}^{(x,m)}_{y-1,+} =k +  \mathbb{1}_{\left\{1\leq y\leq x\right\}}\right\}$ equals $\left\{  L(y,\lambda_{x,m}) = \tau_{k,y}^{\mathscr{B}} \right\}, $ on which
	\[
	\Delta_{y}^{(x,m)} =\sum_{j=0}^{ L(y,\lambda_{x,m})-1} \mathbb{E}\left[ \mathscr{D}_{j+1} -\mathscr{D}_{j}  \vert \mathcal{F}^{\mathscr{B},\mathscr{R}}_{j} \right] = \sum_{j=0}^{\tau^\mathscr{B}_{k,y}-1} \mathbb{E}\left[ \mathscr{D}_{j+1} -\mathscr{D}_{j}  \vert \mathcal{F}^{\mathscr{B},\mathscr{R}}_{j} \right].  
	\] 
	Summing over terms between consecutive stopping times $\tau^{{\mathscr{B}}}_{l,y} $ and $\tau^{\mathscr{B}}_{l+1,y} $,  we get a sum depending only on $\tau^{\mathscr{B}}_{l,y} $, $\tau^{\mathscr{B}}_{l+1,y} $, $l$ and $y$:
	\begin{align} \label{eq: conditional increment}
		\sum_{j=\tau^{\mathscr{B}}_{l,y}}^{\tau^{\mathscr{B}}_{l+1,y}-1} \mathbb{E}\left[ \mathscr{D}_{j+1} - \mathscr{D}_{j}  \vert \mathcal{F}^{\mathscr{B},\mathscr{R}}_{j} \right] =&
		\sum_{j=\tau^{\mathscr{B}}_{l,y}}^{\tau^{\mathscr{B}}_{l+1,y}-1} \frac{ r_y( j-l) - b_y(l)  }{ r_y( j-l) + b_y(l)  } 
		=  g\left(\tau^{\mathscr{B}}_{l,y},\tau^{\mathscr{B}}_{l+1,y},l,y\right),
	\end{align}   
	where $(b_y(i),r_y(i))_{i\geq 0}$ is from \eqref{eq: generalized weights} and only depends on $sgn(y)$. Hence \eqref{eq: cummulated drift at a site} follows.
	
	To get \eqref{eq: conditional mean in GPU represenetation}, we note that by Remark \ref{rk:UrnGeo}, $\abs{  g\left(\tau^{\mathscr{B}}_{l,y},\tau^{\mathscr{B}}_{l+1,y},l,y\right)} \leq  \tau^{\mathscr{B}}_{l+1,y}-\tau^{\mathscr{B}}_{l,y}$ is stochastically dominated by a geometric random variable with its mean independent of $l, y$. In view of \eqref{eq: markov 1} and \eqref{eq: source of inhomogeneity}, $\left(\tau^{\mathscr{B}}_{l,y}\right)_{l \geq 0} $ is independent of $ \mathcal{E}^{(x,m)}_{y-1,+}$, and we get that 
	\begin{align*} 
		\rho_{y}^{(x,m)} 
		=& \mathbb{E}\left[ \sum_{l=0 }^{ k -1  }  g\left(\tau^{\mathscr{B}}_{l,y}\,\,,\,\tau^{\mathscr{B}}_{l+1,y} , l \right) \middle| \mathcal{E}^{(x,m)}_{y-1,+} = k + \mathbb{1}_{\left\{1\leq y\leq x\right\}} \right]	
		\\
		=&
		 \mathbb{E}\left[ \sum_{l=0}^{k-1}  \mathscr{D}_{\tau^{\mathscr{B}}_{l+1,y}} -\mathscr{D}_{\tau^{\mathscr{B}}_{l,y}} \right]  
		= \mathbb{E}\left[  \mathscr{D}_{\tau^{\mathscr{B}}_{k,y}} \right]
		.
	\end{align*}
\end{proof}

The construction of BLPs enables us to study the local time profiles at generic stopping times $\lambda_{x,m}$ from the Markov processes $\zeta$ and $\bar{\zeta}$. An advantage is that a good event $G_{n, K, t}$ (see equations \eqref{eq:good-event-1} -- \eqref{eq:good-event-4}) defined on a collection of spatial points is built from intersections of typical events $G_{n, K}^{(x,m)}(y)$ associated to single sites, see \eqref{eq:goodgood}. Then \eqref{eq: conditional mean in GPU represenetation} enables us to bound quantities in Lemmas~\ref{lm: approximation of means of local drift} and \ref{lm: approx local drift by conditional means} by looking at generalized P\'{o}lya urn process associated to a single site. In the following subsection, we recall some properties of generalized P\'{o}lya urn processes and BLPs.

\subsection{Preliminary Results}
To facilitate our arguments in section \ref{sec: approximations}, we list some results from \cite{KMP23,T96}. 
The first result is a concentration inequality for the signed difference process $\mathscr{D}_{i, y}^{(x,m)}$ in a generalized P\'{o}lya urn. This lemma is a major tool in estimating the probabilities of good events.
\begin{lemma}(Lemma 4.1 \cite{KMP23})\label{lm: concentration inequality}
	Let $y > 0$, so that the weights $r(i) = w(2i)$, $b(i)= w(2i+1) $ for all $i\geq 0$. Then there exists constants $C,c>0$ such that for $k, m \in \mathbb{N}$,
	\[
	\mathbb{P}\left(  \abs{ \mathscr{D}_{\tau_k^{\mathscr{B}}}   } \geq m \right) \leq C e^{\frac{-cm^2}{m \vee k}}.
	\]
\end{lemma}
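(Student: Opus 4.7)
The starting observation is the identity $\mathscr{D}_{\tau_k^{\mathscr{B}}} = \tau_k^{\mathscr{B}} - 2k$, which follows immediately from $\mathscr{B}_{\tau_k^{\mathscr{B}}} = k$ and $\mathscr{R}_{\tau_k^{\mathscr{B}}} = \tau_k^{\mathscr{B}} - k$. Hence $\{|\mathscr{D}_{\tau_k^{\mathscr{B}}}| \geq m\} = \{\tau_k^{\mathscr{B}} \geq 2k+m\} \cup \{\tau_k^{\mathscr{B}} \leq 2k-m\}$, with the second event automatically empty for $m > k$ since $\tau_k^{\mathscr{B}} \geq k$ always. Each remaining event is a deviation event for $\mathscr{B}_n$ at the \emph{deterministic} time $n = 2k \pm m$: the first reads $\mathscr{B}_{2k+m-1} \leq k-1$, i.e.\ $\mathscr{B}_n - n/2 \leq -(m+1)/2$, while the second reads $\mathscr{B}_{2k-m} \geq k$, i.e.\ $\mathscr{B}_n - n/2 \geq m/2$. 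The problem is thus reduced to exponential concentration of $\mathscr{B}_n$ around $n/2$ at these two values of $n$.

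I would then Doob-decompose $\mathscr{B}_n - n/2 = M_n' + A_n'$, where $M_n' = \mathscr{B}_n - \sum_{i<n} p_i$ is a martingale relative to $(\mathcal{F}_n^{\mathscr{B},\mathscr{R}})_n$ with $|\Delta M_n'| \leq 1$ and predictable quadratic variation $\langle M'\rangle_n \leq n$, and $A_n' = \sum_{i<n}(p_i - 1/2)$ is the compensator, with $p_i = b(\mathscr{B}_i)/(b(\mathscr{B}_i) + r(\mathscr{R}_i))$ the conditional blue-draw probability. Bernstein's inequality for martingales applied to $M'$ gives $\mathbb{P}(|M_n'| \geq s) \leq 2\exp(-s^2/(2n + 2s/3))$; plugging $s \asymp m$ and $n \asymp 2k + m$ produces exactly the two-regime bound $Ce^{-cm^2/(m \vee k)}$. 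The claim will therefore follow once one shows that $|A_n'| \leq m/4$ (say) on a high-probability event, so that the assumed deviation of $\mathscr{B}_n - n/2$ forces a deviation of $|M_n'|$ of comparable size.

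The main obstacle is controlling the compensator $A_n'$. The asymptotic \eqref{eq: asymptotics of w} a priori gives only the pointwise estimate $|p_i - 1/2| = O((1 \vee \mathscr{B}_i)^{-p} + (1 \vee \mathscr{R}_i)^{-p})$, whence the worst-case bound $|A_n'| = O(n^{1-p})$, which is $\omega(\sqrt n)$ whenever $p < 1/2$ and therefore too large to absorb into the Bernstein bound. The remedy is to exploit the sign structure of the drift: the Taylor expansion $\mathscr{B}_i^{-p} - \mathscr{R}_i^{-p} \approx p\,\mathscr{B}_i^{-p-1}\mathscr{D}_i$, valid in the typical regime $|\mathscr{D}_i| \ll \mathscr{B}_i$, shows that $|p_i - 1/2|$ is actually controlled by $C|\mathscr{D}_i|/\mathscr{B}_i^{p+1}$, i.e.\ proportional to $|\mathscr{D}_i|$ rather than to a fixed power of $\mathscr{B}_i$. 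One then bootstraps: starting from the naive estimate $|\mathscr{D}_i| = O(i^{1-p})$, reinject into $|A_n'| \leq C\sum_i |\mathscr{D}_i|/\mathscr{B}_i^{p+1}$ to obtain an improved bound, and iterate $O(1/p)$ times until $|\mathscr{D}_i| = O(\sqrt{i\log i})$ and $|A_n'| = o(\sqrt n)$ hold simultaneously on a high-probability event (combined via a union bound with Bernstein's inequality over a dyadic grid of scales). This self-referential compensator estimate, driven by the cancellation coming from the sign of $\mathscr{D}$, is the technical heart of the argument.
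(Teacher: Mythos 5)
Your opening reductions are sound: the identity $\mathscr{D}_{\tau_k^{\mathscr{B}}} = \tau_k^{\mathscr{B}} - 2k$, the translation of the two tail events into deterministic-time deviations of $\mathscr{B}_n$, and Freedman's inequality applied to the Doob martingale $M'_n = \mathscr{B}_n - \sum_{i<n} p_i$ all check out and do produce the two-regime exponent $m^2/(m\vee k)$ for the martingale contribution. Note, though, that this paper does not actually prove the lemma; it is quoted verbatim from \cite{KMP23}, and the only hint given is that the proof ``uses stochastic control of urn process using geometric random variables,'' i.e.\ Remark~\ref{rk:UrnGeo}, in conjunction with T\'oth's exponential martingale $S_k(\lambda)=\prod_{i<\mathscr{B}_k}(1-\lambda/b(i))\prod_{j<\mathscr{R}_k}(1+\lambda/r(j))$ recorded in Lemma~\ref{lm: Toth's Identity}. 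Optional stopping of $S_\cdot(\lambda)$ at $\tau_k^{\mathscr{B}}$ yields an exponential-moment identity for $\mu(k)=\mathscr{D}_{\tau_k^{\mathscr{B}}}+k$ in which the systematic drift appears as the deterministic quantity $V_1(k)-U_1(k)\to\gamma$, so a Chernoff argument delivers the tail directly with the drift absorbed; no separate compensator estimate is needed. Your route is genuinely different from that one.

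The substantive gap is in the compensator step. You reduce the problem to showing $|A'_n|\le m/4$ ``on a high-probability event,'' but for the lemma you need $\mathbb{P}(|A'_n|>m/4)\le C e^{-cm^2/(m\vee k)}$, with exactly the same exponential exponent as the martingale piece. Your bound $|A'_n|\lesssim 1+\sum_{i\le n}|\mathscr{D}_i|/i^{p+1}$ reduces this to a maximal inequality for $\sup_{i\le n}|\mathscr{D}_i|$ --- which is essentially the statement you are trying to prove. The bootstrap you sketch (iterate $|\mathscr{D}_i|=O(i^{1-jp})$ for $j=1,2,\dots$) starts from a deterministic bound, but each pass produces only a \emph{high-probability} bound whose failure probability, if obtained from Bernstein at threshold $\asymp\sqrt{i}\log i$ and unioned over $i\le n$, is at best polynomially small in $n$ --- far short of $e^{-cm^2/(m\vee k)}$ in the regime $m\gtrsim k$. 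To turn the bootstrap into the stated tail one would have to tailor the thresholds to the target level $m$ and run an induction in $(k,m)$, or instead stop $\mathscr{D}$ at a level $\asymp m$ and argue deterministically about $A'$ on the stopped path. The latter, simpler, route runs into a quantitative obstruction: the constant multiplying $\sup_i|\mathscr{D}_i|$ in the bound on $|A'_\sigma|$ scales with $|B|$ from \eqref{eq: asymptotics of w} and need not be small, so a single-stop argument does not close without further localization. None of this is addressed in the proposal, and as written the exponential tail does not come out. This is precisely why the exponential-martingale route of Lemma~\ref{lm: Toth's Identity} is the natural tool here: it produces exponential moments in one stroke rather than trying to reconstruct them from a Doob decomposition plus an a priori control on the drift.
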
 
Lemma \ref{lm: concentration inequality} remains valid for the generalized P\'{o}lya urn process $(\mathscr{B}_{k},\mathscr{R}_{k})_{k \ge 0}$ associated to sites $y<0$ and $y=0$. For these two cases, the sequences of weights $(r(i),b(i))_{i\ge 0}$ are given by \eqref{eq: generalized weights}. The proof of this lemma uses ideas stated in Remark~\ref{rk:UrnGeo}.

The second result is an identity for a generalized P\'{o}lya urn process $(\mathscr{B}_{k},\mathscr{R}_{k})_{k \ge 0}$. For any integer $k\geq 0$ denote by $\mu(k)= \tau^{\mathscr{B}}_k - k$ the number of red balls extracted before the $k$-th blue ball. 
\begin{lemma}(Lemma 1, \cite{T96}) \label{lm: Toth's Identity}
	For any $m\in \mathbb{N}$ and $\lambda < \min\left\{ b(j): 0\leq j\leq m-1 \right\}$, we have the following identity,
	$$  \mathbb{E}\left[  \prod_{j=0}^{ \mu(m)-1 } \left(1+ \frac{\lambda}{r(j)}   \right) \right] =   \prod_{j=0}^{ m-1 } \left(1- \frac{\lambda}{b(j)}   \right)^{-1}.   $$ 
	In particular, 
	\begin{equation}\label{eq: Toth's Identity 1}
		\mathbb{E}\left[  \sum_{j=0}^{ \mu(m)-1 } \frac{1}{r(j)}   \right] =   \sum_{j=0}^{ m-1 } \frac{1}{b(j)}.
	\end{equation}	
\end{lemma}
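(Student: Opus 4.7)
The plan is to prove the product identity by induction on $m$, using a one-step conditioning at consecutive blue draws, and then deduce the scalar identity \eqref{eq: Toth's Identity 1} by differentiating in $\lambda$ at $0$. For $m=0$ both sides equal $1$. For the induction step, set $Z_m := \prod_{j=0}^{\mu(m)-1}(1+\lambda/r(j))$ and condition on the natural filtration of the urn up to the $m$-th blue draw. On this event $\mu(m)$ takes some known value $k$, and the number $N = \mu(m+1)-\mu(m)$ of reds drawn before the next blue satisfies
\begin{equation*}
\mathbb{P}\bigl(N=i \,\big|\, \mathcal{F}^{\mathscr{B},\mathscr{R}}_{\tau^{\mathscr{B}}_{m}}\bigr) = \frac{b(m)}{b(m)+r(k+i)}\prod_{j=0}^{i-1}\frac{r(k+j)}{b(m)+r(k+j)}.
\end{equation*}

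Writing $a_j := (r(k+j)+\lambda)/(b(m)+r(k+j))$, a direct computation then gives
\begin{equation*}
\mathbb{E}\!\left[\frac{Z_{m+1}}{Z_m}\,\bigg|\, \mathcal{F}^{\mathscr{B},\mathscr{R}}_{\tau^{\mathscr{B}}_{m}}\right] = \sum_{i=0}^{\infty}\frac{b(m)}{b(m)+r(k+i)}\prod_{j=0}^{i-1}a_j.
\end{equation*}
The key algebraic observation is that $1-a_i = (b(m)-\lambda)/(b(m)+r(k+i))$, and hence
\begin{equation*}
\frac{b(m)}{b(m)+r(k+i)} = \frac{b(m)}{b(m)-\lambda}\,(1-a_i).
\end{equation*}
Substituting this turns the sum into a telescoping series,
\begin{equation*}
\frac{b(m)}{b(m)-\lambda}\sum_{i=0}^{\infty}(1-a_i)\prod_{j=0}^{i-1}a_j = \frac{b(m)}{b(m)-\lambda}\Bigl(1-\lim_{I\to\infty}\prod_{j=0}^{I-1}a_j\Bigr).
\end{equation*}
Since $r(j)\to 1$ and $\lambda<b(m)$, we have $a_j\to (1+\lambda)/(1+b(m))<1$, so the limiting product vanishes and the conditional expectation equals $(1-\lambda/b(m))^{-1}$. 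Multiplying by $Z_m$ and taking expectations gives $\mathbb{E}[Z_{m+1}] = (1-\lambda/b(m))^{-1}\mathbb{E}[Z_m]$, closing the induction.

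The scalar identity \eqref{eq: Toth's Identity 1} follows by differentiating both sides at $\lambda=0$: the right side differentiates termwise to $\sum_{j=0}^{m-1} 1/b(j)$, while on the left I would justify interchanging differentiation and expectation by observing that $Z_m(\lambda)$ is monotone in $\lambda$ on a small open neighborhood of $0$ contained in $(-\min_j b(j),\min_j b(j))$, with one-sided derivative $Z_m(\lambda)\sum_{j<\mu(m)} 1/(r(j)+\lambda)$ dominated by $Z_m(\lambda_0)\sum_{j<\mu(m)} 1/r(j)$ for any slightly larger $\lambda_0$; monotone convergence applied to the difference quotients then legitimizes the interchange. The main obstacle is the telescoping step: the rewriting of $b(m)/(b(m)+r(k+i))$ in terms of $1-a_i$ is short to verify but not obvious, and together with the vanishing of $\prod_j a_j$ it crucially uses the hypothesis $\lambda<\min_j b(j)$, which is exactly why that hypothesis is essential to the statement.
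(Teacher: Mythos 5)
Your proof is correct and takes a route that differs from, though is closely related to, the one the paper hints at. After stating the lemma, the paper remarks that the product identity ``can be proved via (exponential) martingales,'' namely by observing that $S_k(\lambda) = \prod_{i=0}^{\mathscr{B}_k-1}(1-\lambda/b(i))\prod_{j=0}^{\mathscr{R}_k-1}(1+\lambda/r(j))$ is a martingale in the draw index $k$ (a one-line check against the urn's transition probabilities) and then applying optional stopping at the unbounded time $\tau^{\mathscr{B}}_m$, at which $S_{\tau^{\mathscr{B}}_m}=\prod_{i<m}(1-\lambda/b(i))\cdot Z_m$. You instead induct on $m$ and evaluate $\mathbb{E}[Z_{m+1}/Z_m \mid \mathcal{F}^{\mathscr{B},\mathscr{R}}_{\tau^{\mathscr{B}}_m}]$ explicitly as a telescoping series. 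The algebraic identity driving your telescoping, $\frac{b(m)}{b(m)+r(k+i)} = \frac{b(m)}{b(m)-\lambda}(1-a_i)$, is precisely the cancellation that makes $S_k$ a martingale, so the two proofs are two faces of one fact; what your version buys is that the limiting control needed to push optional stopping through the unbounded time $\tau^{\mathscr{B}}_m$ (here, the geometric decay of $\prod_{j<I}a_j$, from $r(j)\to 1$ and $\lambda<b(m)$) is carried out by hand rather than delegated to a black-box theorem, making the argument longer but fully self-contained. Your passage to \eqref{eq: Toth's Identity 1} by differentiating at $\lambda=0$ is also sound; one small remark is that your final appeal to monotone convergence of the difference quotients is the clean way in (since $Z_m(\lambda)$ is a polynomial in $\lambda$ with nonnegative coefficients, so $(Z_m(\lambda)-1)/\lambda$ decreases to $\sum_{j<\mu(m)}1/r(j)$ as $\lambda\downarrow 0$ and has finite expectation at any fixed $\lambda_0\in(0,\min_j b(j))$), whereas the domination by $Z_m(\lambda_0)\sum_{j<\mu(m)}1/r(j)$ that you first sketch would itself need an integrability argument for the dominating function.
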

The second identity \eqref{eq: Toth's Identity 1} is a direct consequence of the first identity, and the first identity can be proved via the following (exponential) martingale associated to the generalized P\'{o}lya urn process $(\mathscr{B}_{k},\mathscr{R}_{k})$: 
\[
\phi_k(\lambda) = \prod_{i=0}^{ \mathscr{B}_{k}-1 } \left(1-\frac{\lambda}{b(i)}\right) \prod_{j=0}^{\mathscr{R}_{k}-1 } \left(1+\frac{\lambda}{r(j)}\right)
.\]

The third result gives diffusion approximations of the branching-like processes, and it is a consequence of Proposition A.3 in \cite{KMP23}. Its proof follows arguments from T\'{o}th \cite{T96}. This result is key to the proof of process level tightness of extrema, Proposition 2.1 \cite{KMP23}. 
In the lemma below, let $\sigma_0^Z$ be the first hitting time of level $0$ for a process $\left( Z_t \right) _{t \in [0,\infty )}$ starting from a positive value, i.e. $\sigma_0^Z := \inf \{t>0: Z_t \le 0\} $.
\begin{lemma}(Proposition A.3 \cite{KMP23}; Theorem 1A \cite{T96})\label{lm: diffusion approximation of blp}
	\begin{enumerate}
		\item 
		For $n\geq 1$, let $\zeta^{(n)}=(\zeta^{(n)}_k)_{k\geq 0 }  $ be the BLP with initial value $\zeta^{(n)}_0 = \lfloor yn \rfloor$ for some $y \geq 0$, and let $\mathcal{Z}_n(t) = \frac{\zeta^{(n)}_{\lfloor nt \rfloor}}{n}$ for $n\geq 1$ and $t\geq 0$. Then we have that 
		\[
		\mathcal{Z}_n(.) \Longrightarrow Z^{(2-2\gamma)}(.)
		\] 
		as $n$ goes to infinity on $D([0,\infty))$, where $2Z^{(2-2\gamma)}(.)$ is a squared Bessel process of dimension $2-2\gamma$, with $Z^{(2 - 2 \gamma)}(0) = y$.
		
		\item
		For $n\geq 1$, let $\tilde\zeta^{(n)}=(\tilde\zeta^{(n)}_k)_{k\geq 0 }  $ be the BLP with initial value $\tilde\zeta^{(n)}_0 = \lfloor yn \rfloor$ for some $y \geq 0$, and let $\tilde{\mathcal{Z}}_n(t) = \frac{\tilde\zeta^{(n)}_{\lfloor nt \rfloor}}{n}$ for $n\geq 1$ and $t\geq 0$. Then we have that 
		\[
		\left(\tilde{\mathcal{Z}}_n(.), \sigma_0^{\tilde{\mathcal{Z}}_n}\right) 
		\Longrightarrow \left(Z^{(2\gamma)}(. \wedge \sigma_0^{Z^{(2 \gamma)}}), \sigma_0^{Z^{(2 \gamma)}}\right)
		\]
		as $n$ goes to infinity on $D([0,\infty)) \times [0,\infty )$, where $2Z^{(2\gamma)}(.)$ is a squared Bessel process of dimension $2\gamma$, with $Z^{( 2 \gamma)}(0) = y$.
	\end{enumerate}

\end{lemma}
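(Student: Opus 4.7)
The plan is to identify each BLP as a Markov chain whose one-step transitions are governed by generalized P\'{o}lya urn reading times via \eqref{eq: transition prob on positive} and \eqref{eq: transition prob on negative}, then to derive the diffusion limit by the standard strategy of tightness plus martingale-problem identification. The target process, with $2 Z^{(d)}$ a squared Bessel process of dimension $d$, satisfies $dZ_t = \sqrt{2 Z_t}\, dB_t + (d/2)\, dt$, so I need to verify that at spatial scale $un$ the rescaled infinitesimal drift converges to $d/2$ (which is $\gamma$ for $\tilde\zeta$ and $1-\gamma$ for $\zeta$) and that the rescaled infinitesimal variance converges to $2u$.

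The core computation is at a typical spatial scale. Conditionally on $\tilde\zeta_k = m$, the transition \eqref{eq: transition prob on positive} gives $\tilde\zeta_{k+1} - m \stackrel{d}{=} \mathscr{D}_{\tau^{\mathscr{B}}_{m,1}}$ at the site-$1$ urn. Differentiating the exponential moment identity in Lemma \ref{lm: Toth's Identity} once and twice at $\lambda = 0$ produces explicit formulas for $\mathbb{E}[\mathscr{D}_{\tau^{\mathscr{B}}_m}]$ and $\mathrm{Var}(\mathscr{D}_{\tau^{\mathscr{B}}_m})$ in terms of $b(\cdot), r(\cdot)$; substituting the asymptotics \eqref{eq: asymptotics of w} and invoking the definition \eqref{eq: gamma} of $\gamma$ yields $\mathbb{E}[\mathscr{D}_{\tau^{\mathscr{B}}_m}] \to \gamma$ and $\mathrm{Var}(\mathscr{D}_{\tau^{\mathscr{B}}_m}) = 2m(1 + o(1))$ as $m \to \infty$. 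The analogous computation for $\zeta$ via \eqref{eq: transition prob on negative} and Remark \ref{rm:symmetry} yields drift $1-\gamma + o(1)$ and variance $2m(1 + o(1))$; the inhomogeneity of $\zeta$ at the steps $k \in \{x-1, x\}$ affects only a bounded number of increments and is therefore negligible under diffusive scaling.

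Given these per-step moments, tightness of $\mathcal{Z}_n$ and $\tilde{\mathcal{Z}}_n$ in $D([0,\infty))$ follows from Aldous' criterion: the concentration inequality of Lemma \ref{lm: concentration inequality}, applied to the one-step increment $\mathscr{D}_{\tau^{\mathscr{B}}_m}$, gives sub-Gaussian tails at scale $\sqrt{m \vee k}$, so large jumps vanish under diffusive rescaling and the predictable quadratic variation accumulates at the correct linear rate. The subsequential limit is then identified by verifying, for each $f \in C_c^2([0,\infty))$, that $f(\tilde{\mathcal{Z}}_n(t)) - \int_0^t (Lf)(\tilde{\mathcal{Z}}_n(s))\, ds$ is asymptotically a martingale, where $Lg(u) = u g''(u) + \gamma g'(u)$ (respectively $u g''(u) + (1-\gamma) g'(u)$ for $\zeta$) is the generator of the limit diffusion. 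Well-posedness of the squared Bessel martingale problem on $[0,\infty)$ uniquely characterizes the limit.

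The main technical obstacle is the joint convergence in part (2) of the killing time $\sigma_0^{\tilde{\mathcal{Z}}_n}$ to $\sigma_0^{Z^{(2\gamma)}}$, since hitting times are not continuous functionals on Skorokhod space in general. I handle this by observing that the discrete BLP $\tilde\zeta$ is absorbed at $0$, so $\sigma_0^{\tilde{\mathcal{Z}}_n}$ is both the first zero time and the first non-positive time of $\tilde{\mathcal{Z}}_n$; for $\gamma \in (0,1)$, the squared Bessel process $2Z^{(2\gamma)}$ of dimension $2\gamma \in (0,2)$ a.s.\ first reaches $0$ in finite time by a continuous descent from positive values, and for $\gamma \leq 0$ it is absorbed at $0$ upon arrival. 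In both regimes, the first-hitting functional is a.s.\ continuous at the limit trajectory, so a Skorokhod embedding combined with the continuous mapping theorem yields the joint convergence.
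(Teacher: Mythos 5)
The paper offers no proof of this lemma: the text explicitly says it is ``a consequence of Proposition A.3 in \cite{KMP23}'' whose ``proof follows arguments from T\'oth \cite{T96}.'' There is therefore no internal derivation to compare against; your write-up is a standalone reconstruction. As a reconstruction it is methodologically sound, and its numerics check out: the forward BLP has $\tilde\zeta_{k+1}-\tilde\zeta_k \stackrel{d}{=} \mathscr{D}_{\tau^{\mathscr{B}}_m}$ given $\tilde\zeta_k=m$, whose mean tends to $\gamma$ by Lemma~\ref{lm: convergence of mean of discrepancies} and whose variance is $2m(1+o(1))$ by the second derivative of T\'oth's identity at $\lambda=0$, matching the generator $ug''(u)+\gamma g'(u)$ of the $2\gamma$-dimensional squared Bessel diffusion in the paper's normalization $2Z^{(d)}\sim \mathrm{BESQ}(d)$; the analogous computation for $\zeta$ yields drift $1-\gamma$. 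Your route — per-step moments, Aldous tightness, and Stroock--Varadhan identification — replaces the Laplace-transform route used in \cite{T96} (optional stopping of the exponential martingale $S_k(\lambda)$ to obtain convergence of finite-dimensional Laplace transforms to those of the squared Bessel process). Both are standard; your version is less computational but requires well-posedness of the BESQ martingale problem across the full range of dimensions arising for $\gamma<1$, including $2\gamma\le 0$.

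Two points need repair. First, the claim that the inhomogeneity of $\zeta$ is confined to the steps $k\in\{x-1,x\}$ is incorrect. Reading \eqref{eq: transition prob on negative}: for all $k\ge x$ the chain switches \emph{permanently} to the negative-site urn, for which the per-step expectation of $\zeta_{k+1}-\zeta_k$ tends to $\gamma$, not $1-\gamma$. This is not a boundedly-many-step irregularity but a second homogeneous regime. The statement ``$\mathcal Z_n\Rightarrow Z^{(2-2\gamma)}$ on $D([0,\infty))$'' must therefore be understood as a statement about the homogeneous positive-site chain (or on a time window short of the crossover step $k=x$); your argument should state this explicitly rather than declaring the crossover negligible. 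Second, in part (2) the joint convergence of the hitting time is the genuinely delicate part, as you acknowledge, but the resolution you propose — continuity of $\sigma_0$ at the stopped limit trajectory plus continuous mapping — is only half the argument. It gives $\liminf_n\sigma_0^{\tilde{\mathcal Z}_n}\ge \sigma_0^{Z^{(2\gamma)}}$ with little work, but the matching upper bound needs a separate quantitative estimate: for every $\epsilon>0$ there is $\delta>0$ such that, uniformly in large $n$, the rescaled BLP started below level $\delta$ is absorbed at $0$ within time $\epsilon$ with probability at least $1-\epsilon$. That uniform-in-$n$ absorption estimate does not follow from path-level weak convergence and must be supplied (e.g., by stochastic domination by a subcritical branching process, or by the concentration bound of Lemma~\ref{lm: concentration inequality} applied to control the descent). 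With those two items filled in, the plan is complete.
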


The next two results give control of site local times from below and from above.

\begin{lemma}(Lemma 2.2 \cite{KMP23})\label{lm: number of rarely visit sites}
	Let $\gamma_+ = \gamma \vee 0$. Then for any $M>0$ and any $b>\frac{\gamma_+}{2}$, we have
	\[
	\lim_{n\to\infty} \mathbb{P}\left(\sup_{k\leq\lfloor nt \rfloor}  \sum_{x\in [I_{k-1}, S_{k-1}]} \mathbb{1}_{\left\{ L(x,k-1) \leq M \right\}} \geq 4n^b \right) = 0.
	\]
	
\end{lemma}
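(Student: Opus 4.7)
By Remark~\ref{rm:symmetry}, it suffices to bound the positive-side count
\[
N^+(k) := \sum_{x \geq 1} \mathbb{1}_{\{x \leq S_{k-1},\, L(x, k-1) \leq M\}}.
\]
Introduce the stopping times $\sigma_m := \lambda_{0, m-1}$ (time of the $m$-th visit to $0$) and choose $m_n := \lceil K\sqrt{n}\rceil$ with $K$ large. Tightness of $L(0, \lfloor nt \rfloor)/\sqrt{n}$ (which follows from Proposition~\ref{prop: tightness} applied symmetrically, via the BLP representation at the origin) yields $\mathbb{P}(\sigma_{m_n} > nt) \to 1$ as $K \to \infty$, so I restrict to this event.

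Applying the source-of-inhomogeneity identity \eqref{eq: source of inhomogeneity} at $x = 0$ gives, for $y \geq 1$,
\[
L(y, \sigma_m) \;=\; \mathcal{E}^{(0, m-1)}_{y-1, +} + \mathcal{E}^{(0, m-1)}_{y+1, -} \;=\; \tilde{\zeta}^{(0, m-1)}_{y-1} + \tilde{\zeta}^{(0, m-1)}_{y},
\]
whence
\[
N^+(\sigma_m) \;\leq\; \#\bigl\{k \geq 0 : \tilde{\zeta}^{(0, m-1)}_k \leq M\bigr\} + 1,
\]
reducing a site-level count to a BLP-level count. The BLP starts near $m/2$ (roughly half the draws from the urn at $0$), and by Lemma~\ref{lm: diffusion approximation of blp}(2), $\tilde{\zeta}^{(0, m_n - 1)}_{\lfloor m_n s \rfloor}/m_n$ converges in distribution to a squared Bessel process of dimension $2\gamma$ stopped at its first hitting time of zero. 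A near-zero analysis of this limit (together with the convergence) yields $\#\{k : \tilde{\zeta}^{(0, m_n - 1)}_k \leq M\} = O(n^{\gamma_+/2 + o(1)})$ with high probability, which is eventually less than $n^b$ for any fixed $b > \gamma_+/2$. To pass from the supremum over $\{\sigma_m\}$ to the supremum over all $k \leq nt$, observe that for $k \in [\sigma_j, \sigma_{j+1})$ and $y \leq S_{\sigma_j}$, monotonicity of $L$ in time gives $L(y, \sigma_j) \leq L(y, k-1) \leq M$, so such $y$ are already counted in $N^+(\sigma_j)$; the contribution from sites $y \in (S_{\sigma_j}, S_{k-1}]$ newly visited during the excursion is absorbed by refining the partition with additional stopping times at first passages to scale $i \cdot n^{b/4}$ and applying a union bound.

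\textbf{Main difficulty.} The critical technical step is the quantitative tail estimate
\[
\mathbb{P}\bigl(\#\{k : \tilde{\zeta}^{(0, m-1)}_k \leq M\} \geq n^b\bigr) = o(m_n^{-1})
\]
uniformly in $m \leq m_n$, needed for a union bound over the $O(m_n)$ stopping times. The bare weak convergence in Lemma~\ref{lm: diffusion approximation of blp} does not supply such rates, because the functional $\xi \mapsto \#\{k : \xi_k \leq M\}$ is not well-behaved under Skorohod convergence. I would obtain the bound by combining Toth's identity (Lemma~\ref{lm: Toth's Identity}) applied to the exponential functionals $\prod_k (1 + \lambda/r(\cdot))$ of $\tilde{\zeta}$ with the concentration estimate in Lemma~\ref{lm: concentration inequality}, which yields explicit moment-generating control of $\tilde{\zeta}_k$ and hence the polynomial tail bounds with the required uniformity in $m$.
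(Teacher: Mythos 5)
The paper does not prove this lemma: it is cited verbatim as Lemma~2.2 of \cite{KMP23}, and the paper merely remarks that the proof there uses BLP analysis and the concentration inequality of Lemma~\ref{lm: concentration inequality}. So there is no in-paper proof to compare against; I can only assess your sketch on its own.

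Your high-level plan is plausible and likely aligned with \cite{KMP23}: reduce to one side by symmetry, stop at return times to the origin to bring in the BLP $\tilde\zeta$, use $L(y,\lambda_{0,m-1})=\tilde\zeta_{y-1}+\tilde\zeta_y$ to convert a site-local-time count into a BLP-level count, and then union bound over $O(\sqrt{n})$ stopping times. (Two small corrections: the restriction to $\sigma_{m_n}>nt$ follows from Lemma~\ref{lm: uniform control of local time}, not directly from Proposition~\ref{prop: tightness}; and $\#\{k:\tilde\zeta_k\le M\}$ must be restricted to $k$ before absorption, else it is infinite.) But the proof has a genuine unresolved gap exactly where you flag the ``main difficulty.'' Your assertion that a near-zero analysis of the BESQ$(2\gamma)$ limit gives $\#\{k:\tilde\zeta_k\le M\}=O(n^{\gamma_+/2+o(1)})$ is stated without derivation, and it is not at all clear that it follows from Lemma~\ref{lm: diffusion approximation of blp}: weak convergence in the Skorohod topology is blind to the occupation count of a shrinking window $\{z\le M/m_n\}$, which is precisely the quantity you need, and the scale/speed computation for BESQ of dimension $d<2$ actually suggests the \emph{expected} time below $M$ is $O(M)$, dimension-independent, so the $n^{\gamma_+/2}$ threshold must be coming from a tail estimate rather than from the diffusion heuristic you invoke. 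The promised resolution, ``combining T\'{o}th's identity with the concentration estimate to get moment-generating control of $\tilde\zeta_k$ and hence polynomial tail bounds,'' is not carried out; Lemma~\ref{lm: concentration inequality} controls a single urn $\mathscr{D}_{\tau_k^{\mathscr{B}}}$, not the time the spatial chain $\tilde\zeta$ spends near zero, and T\'{o}th's identity concerns $\mu(m)$ at a single site, so the step from these ingredients to a $o(n^{-1/2})$-tail on the occupation count is precisely the content of the lemma and cannot be waved through. Finally, the passage from the supremum over $\{\sigma_m\}$ to the supremum over all $k\le nt$ via ``additional stopping times at first passages to scale $in^{b/4}$'' is only gestured at; between two returns to the origin the walk can create up to $S_{\sigma_{j+1}}-S_{\sigma_j}$ fresh low-local-time sites, and the bookkeeping showing that the refined union bound still closes (and that the extra $O(n^{1/2-b/4})$ stopping times do not ruin it) is not supplied. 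In short, the skeleton is reasonable, but the quantitative core — the uniform-in-$m$ tail bound on $\#\{k:\tilde\zeta_k\le M\}$ with the correct exponent — is left as an unproven claim.
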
	
The proof of Lemma~\ref{lm: number of rarely visit sites} involves analysis of BLPs and the concentration inequality for the generalized P\'{o}lya urn process in Lemma~\ref{lm: concentration inequality}. The statement of Lemma \ref{lm: number of rarely visit sites} remains in force if we replace the range $[I_{k-1}, S_{k-1}]$ by $[X_{k - 1},S_{k - 1}]$ (or $[I_{k-1},X_{k - 1}]$ respectively), and replace the local times $L(x,k-1)$ by the numbers of up-crossings $\mathcal{E}^{k}_{x,+}$ (or $\mathcal{E}^{k}_{x,-}$ respectively). After such replacements, these two statements are partial steps in the proof of Lemma 2.2 in \cite{KMP23}.   

\begin{lemma}
	\label{lm: uniform control of local time}
	\[
	\lim_{K \to  \infty } \limsup_{n \to \infty } \mathbb{P}\left( \sup_{y \in \mathbb{Z}} L\left( y, n \right) > K \sqrt{n}  \right) = 0
	.\] 
\end{lemma}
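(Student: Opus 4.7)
The plan is to first control $L(0, n)$ and then bound all other local times in terms of $L(0, n)$ using the branching-like process (BLP) representation from Section~\ref{sec: generalized Polya Urn, BLP}.

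\textbf{Step 1: Control of $L(0, n)$.} The event $\{L(0, n) > K\sqrt{n}\}$ coincides with $\{\lambda_{0, m-1} < n\}$ for $m = \lceil K\sqrt{n}\rceil$. Since $\mathcal{E}^{(0, m-1)}_{0, +} + \mathcal{E}^{(0, m-1)}_{0, -} = m - 1$, by symmetry we may assume $\tilde{\zeta}_0 := \mathcal{E}^{(0, m-1)}_{0, +} \ge m_0 := \lfloor (m-1)/2\rfloor$. Combining $L(y, \lambda_{0, m-1}) = \mathcal{E}^{(0, m-1)}_{y, +} + \mathcal{E}^{(0, m-1)}_{y, -}$ for $y > 0$ with the identity $\mathcal{E}^{(0, m-1)}_{y-1, +} = \mathcal{E}^{(0, m-1)}_{y, -}$ from \eqref{eq: source of inhomogeneity}, the total time on the positive side equals
\[
\sum_{y > 0} L(y, \lambda_{0, m-1}) \;=\; 2\sum_{k \ge 0}\tilde{\zeta}_k - \tilde{\zeta}_0 \;\le\; \lambda_{0, m-1} + 1.
\]
By stochastic monotonicity of the BLP in its initial value (coupled via the common urn processes), it suffices to bound $\mathbb{P}\bigl(\sum_{k \ge 0}\tilde{\zeta}^{(m_0)}_k \le n\bigr)$. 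Applying Lemma~\ref{lm: diffusion approximation of blp}(2) with the continuous mapping theorem yields $m_0^{-2}\sum_k \tilde{\zeta}^{(m_0)}_k \Rightarrow J := \int_0^{\sigma_0^{Z^{(2\gamma)}}} Z^{(2\gamma)}(t)\,dt$, which is a.s.\ strictly positive since $Z^{(2\gamma)}$ is continuous with $Z^{(2\gamma)}(0) = 1$. As $m_0 \sim K\sqrt{n}/2$, this gives $\limsup_n \mathbb{P}(\sum_k \tilde{\zeta}^{(m_0)}_k \le n) \le \mathbb{P}(J \le 4/K^2) \to 0$ as $K \to \infty$. The left-leaning case is handled identically via Remark~\ref{rm:symmetry}, proving $\lim_{K \to \infty}\limsup_{n \to \infty}\mathbb{P}(L(0, n) > K\sqrt{n}) = 0$.

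\textbf{Step 2: Reduction of $\sup_y L(y, n)$ to $L(0, n)$.} Let $T := \inf\{t \ge n : X_t = 0\}$, finite a.s.\ by recurrence, so that $T = \lambda_{0, M}$ for some $M \le L(0, n)$. For $y > 0$, $L(y, n) \le L(y, T) = \mathcal{E}^{(0, M)}_{y, +} + \mathcal{E}^{(0, M)}_{y-1, +}$ by the same identities as in Step~1, hence
\[
\sup_{y > 0} L(y, n) \;\le\; 2 \sup_{k \ge 0}\tilde{\zeta}_k',
\]
where $\tilde{\zeta}_k' := \mathcal{E}^{(0, M)}_{k, +}$ is the rightward BLP from $0$ at time $T$, with $\tilde{\zeta}_0' \le M \le L(0, n)$. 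By Lemma~\ref{lm: diffusion approximation of blp}(2) the rescaled supremum $\sup_k \tilde{\zeta}_k'/\tilde{\zeta}_0'$ converges in distribution to $\sup_{t \le \sigma_0} Z^{(2\gamma)}(t)$, a.s.\ finite and hence tight. Combined with the $O_P(\sqrt{n})$ bound on $L(0, n)$ from Step~1 and the symmetric argument for $y < 0$, we conclude $\sup_y L(y, n) = O_P(\sqrt{n})$, which is the desired claim.

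\textbf{Main obstacle.} The principal technical subtlety is applying Lemma~\ref{lm: diffusion approximation of blp}(2) to BLPs whose initial values are themselves random and correlated with the event being estimated. In Step~1 this is circumvented by stochastic monotonicity of BLPs in the initial condition, reducing to a deterministic (but $n$-dependent) starting point to which the lemma applies via the continuous mapping theorem for the integral functional; in Step~2 the same lemma instead yields tightness of the scale-invariant ratio $\sup_k \tilde{\zeta}_k'/\tilde{\zeta}_0'$, and this uniform control paired with the independent Step~1 bound on $\tilde{\zeta}_0'$ closes the argument.
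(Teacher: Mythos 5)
Your Step 1 is essentially sound and close in spirit to the paper: you bound $L(0,n)$ by observing that if $\lambda_{0,m-1}\le n$ then the total time spent on the positive side is at most $n$, which by the identity $\sum_{y>0}L(y,\lambda_{0,m-1}) = 2\sum_k\tilde\zeta_k - \tilde\zeta_0$ and monotone coupling is compared to a BLP started from a deterministic level of order $K\sqrt{n}$, to which Lemma~\ref{lm: diffusion approximation of blp}(2) and the continuous mapping theorem apply. The conditioning here is legitimate because $\tilde\zeta_0 = \mathcal{E}^{(0,m-1)}_{0,+}=\mathscr{R}^{(0)}_{m-1}$ with $m-1$ deterministic is measurable with respect to the urn at site $0$ alone, which is independent of the urns at $y\ge 1$ that generate the rest of the BLP.

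Step 2, however, has a genuine gap. You set $T=\inf\{t\ge n: X_t=0\}=\lambda_{0,M}$ and attempt to apply Lemma~\ref{lm: diffusion approximation of blp}(2) to the process $\tilde\zeta'_k=\mathcal{E}^{(0,M)}_{k,+}$. The lemma concerns BLPs started from a deterministic value; to use it here you would need the conditional law of $(\tilde\zeta'_k)_{k\ge 1}$ given $\tilde\zeta'_0$ (equivalently, given $M$) to be that of a BLP with the transition kernel \eqref{eq: transition prob on positive}. This fails: $M$ is defined as the smallest $m$ with $\lambda_{0,m}\ge n$, and $\lambda_{0,m}$ is a function of the excursion lengths from $0$, i.e.\ of the urn processes at \emph{all} sites, not just site $0$. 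Conditioning on $\{M=m\}=\{\lambda_{0,m}\ge n>\lambda_{0,m-1}\}$ therefore biases the very BLP you want to control, so the claim "$\sup_k\tilde\zeta'_k/\tilde\zeta'_0 \Rightarrow \sup_{t\le\sigma_0}Z^{(2\gamma)}(t)$" is not a consequence of Lemma~\ref{lm: diffusion approximation of blp}(2) as stated. (A secondary issue, even setting dependence aside: $\tilde\zeta'_0$ can be small or zero, in which regime the ratio $\sup_k\tilde\zeta'_k/\tilde\zeta'_0$ is not tight; and the "tightness" you invoke would anyway require a uniform-in-initial-condition version of the diffusion approximation that is not established.) The paper avoids all of this by stopping instead at $\lambda_{0,m_{n,K}}$ with $m_{n,K}=\tau^{\mathscr{R},0}_{\sqrt{Kn}}$, a stopping time measurable with respect to the urn at site $0$ alone, which forces $\mathcal{E}^{(0,m_{n,K})}_{0,+}=\sqrt{Kn}$ deterministically; the BLP then starts at a fixed height and Lemma~\ref{lm: diffusion approximation of blp}(2) applies directly. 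Your Step 2 could be repaired along similar lines by replacing $\lambda_{0,M}$ with $\lambda_{0,\lceil K\sqrt{n}\rceil}$ on the high-probability event $\{L(0,n)\le K\sqrt{n}\}$ from Step 1, using monotonicity of local times in time and of the BLP in its (then urn-at-$0$-measurable) initial value, but as written the argument does not close.
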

The proof uses diffusion approximations of the BLP, Lemma~\ref{lm: diffusion approximation of blp}, and mostly coincides with (Lemma~3.4, \cite{KP16}). 

\begin{proof}
	Consider the event that $L(y, n) > K \sqrt{n} $ for some $y \in \mathbb{Z}, n>0$, and $K > 0$. Define $m_{n, K } = \tau^{\mathscr{R}}_{\sqrt{K n},0 }$. Then exactly one of the following occurs:
	\begin{itemize}
		\item $L(y, \lambda_{0, m_{n, K}}) \le L(y, n)$, in which case $\lambda_{0, m_{n, K}} \le  n$;
		\item $L(y, \lambda_{0, m_{n, K}}) >  L(y, n)  > K \sqrt{n} $.
	\end{itemize}
	By symmetry we only need to control $\mathbb{P}\left( \sup _{y \ge  0} L(y, n) > K \sqrt{n}  \right) $. In view of the above two cases, we have
	\begin{align*}
		&\mathbb{P}\left( \sup _{y \ge  0} L(y, n) > K \sqrt{n}  \right) \\
		&\le \mathbb{P}\left( \lambda_{0, m_{n, K}} \le n \right) + \mathbb{P}\left( \sup _{y \ge 0} L\left( y, \lambda_{0, m_{n, K}}\right)   > K \sqrt{n}  \right)  \\
		&\le 2\mathbb{P}\left( 2 \sum_{y \ge 0} \mathcal{E}_{y, +}^{(0,m_{n, K})} \le n \right) + \mathbb{P}\left( \sup _{y \ge 0} \mathcal{E}_{y,+}^{\left( 0,m_{n, K} \right) }   > \frac{K \sqrt{n} }{2}  \right)+ \mathbb{P}\left( L(0,\lambda_{0, m_{n, K}}) > K \sqrt{n}  \right)  \\
		&\le 
		2\mathbb{P}\left( 2 \sum_{i \ge  0} \tilde \zeta_i \le  n \middle| \tilde\zeta_0 = \sqrt{K n} \right) + 
		\mathbb{P}\left( \sup _{i \ge 0} \tilde \zeta_k > \frac{K \sqrt{n} }{2} \middle| \tilde \zeta_0 = \sqrt{K n}   \right)+ 
		\mathbb{P}\left( \mathscr{R}^{(0)} _{K \sqrt{n} } < \sqrt{K n}  \right) 
		.\end{align*}
	The last probability is controlled by the concentration inequality for urn process, Lemma~\ref{lm: concentration inequality}. Using diffusion approximations of BLP, Lemma~\ref{lm: diffusion approximation of blp}, and scaling properties of BESQ processes, the first two probabilities have limits as $n$ goes to infinity :
	\begin{align*}
		\mathbb{P}\left( 2 \sum_{i \ge  0} \tilde \zeta_i \le  n \middle| \tilde\zeta_0 = \sqrt{K n} \right) 
		&\to 
		\mathbb{P}\left(2 \int _0^{\sigma_0^{Z}} Z^{(2 \gamma)}(s) d s \le \frac{1}{K} \middle| Z^{(2 \gamma)}(0) = 1 \right),\\
		\mathbb{P}\left( \sup _{i \ge 0} \tilde \zeta_k > \frac{K \sqrt{n} }{2} \middle| \tilde \zeta_0 = \sqrt{K n}   \right)
		&\to 
		\mathbb{P}\left( \sup_{s \ge 0} Z^{(2 \gamma)}(s \wedge \sigma_0^{Z}) > \frac{\sqrt{K} }{2} \middle| Z^{(2 \gamma)} (0) = 1 \right) 
		.\end{align*} 
	Both probabilities on the right hand side go to zero as $K$ goes to infinity. 
\end{proof}

\section{Approximations of Accumulated Drift}\label{sec: approximations}

In this section, we prove the technical Lemmas~\ref{lm: control of martingale}, \ref{lm: approximation of means of local drift} and \ref{lm: approx local drift by conditional means} to complete  the proof of Theorem \ref{th: main}. 
The proof of Lemma~\ref{lm: approximation of means of local drift} uses typical events that provide control on the process extrema (Proposition~\ref{prop: tightness}), regularity of urn process (Lemma~\ref{lm: concentration inequality}), and control of rarely visited sites (Lemma~\ref{lm: number of rarely visit sites}). 
In contrast, Lemma~\ref{lm: approx local drift by conditional means} requires uniform control of local time processes (Lemma~\ref{lm: uniform control of local time}), and the proof requires the diffusion approximations of BLPs (Lemma~\ref{lm: diffusion approximation of blp}).
To simplify our argument, we define the typical event $G_{n, K, t}$ in the next subsection, and apply its typicality in subsequent proofs.

\subsection{Typical Events}

For integers $K>0$, $n > 0$, and real number $t>0$, define the event
\begin{align}
	G_{n,K,t} :=  \qquad
	\label{eq:good-event-1}
	& \left\{\sup _{k \le \left\lfloor nt  \right\rfloor} |X_k| < K \sqrt{n} \right\} \cap \\
	\label{eq:good-event-2}
	& \left\{\sup_{y \in \mathbb{Z}} L(y, \left\lfloor nt  \right\rfloor) < K \sqrt{n} \right\} \cap \\
	\label{eq:good-event-3}
	& \bigcap_{y = - K \sqrt{n} }^{K \sqrt{n}} 
	\bigcap_{i = 1}^{K \sqrt{n} } \left\{\left| \tau_{i,y}^{\mathscr{B}} - 2 i \right| < \sqrt{ i } \log^2 n \right\}  \cap \\
	\label{eq:good-event-4}
	& \bigcap_{y = - K \sqrt{n} }^{K \sqrt{n}} 
	\bigcap_{i = 0}^{K \sqrt{n} } \left\{\left| \tau_{i+1, y}^{\mathscr{B}} - \tau_{i,y}^{\mathscr{B}} \right| < \log^2 n \right\}  
	.\end{align}

For each site $x \in \mathbb{Z}$ and $y > x$, and each integer $m > 0$, define the event
\begin{align}
	G_{n,K}^{(x,m)}(y) :=  \qquad
	\label{eq:good-event-5}
	& \left\{L(y,\lambda_{x,m})  < K \sqrt{n} \right\} \cap \\
	\label{eq:good-event-6}
	& \left\{\left| \tau_{i,y}^{\mathscr{B}} - 2 i \right| < \sqrt{ i } \log^2 n, \mbox{for all $i\leq  \mathcal{E}_{y,-}^{(x,m)}$} \right\}  \cap \\
	\label{eq:good-event-7}
	& \left\{\left| \tau_{i+1,y}^{\mathscr{B}} - \tau_i^{\mathscr{B},y} \right| < \log^2 n,  \mbox{for all $i\leq  \mathcal{E}_{y,-}^{(x,m)}$}  \right\}  
	.\end{align}
We see that  $\left\{G_{n, K}^{(x,m)}(y)\right\}_{y \ge x}$ is $\mathcal{H}^{(x,m)}$-adapted, i.e. $G_{n, K}^{(x,m)}(y)\in \mathcal{H}_{y, +}^{(x,m)}$.
Also note that for any $ \abs{x},m < K\sqrt{n}$, 
\begin{equation}
	\label{eq:goodgood}
	G_{n, K, t}\cap \left\{ \lambda_{x,m} \leq\lfloor nt \rfloor \right\} 
	\subset   \bigcap_{x<y< K \sqrt{n} } G_{n, K}^{(x,m)}(y) 
	\subset   \bigcap_{x<y< K \sqrt{n} } G_{n, K^2}^{(x,m)}(y)
	.\end{equation} 

\begin{lemma}
	\label{lm:good-event}
	For any $t > 0$, the event $G_{n,K,t}$ is typical in the sense that
	\[
	\lim_{K \to \infty } \limsup_{n \to \infty } 
	\mathbb{P}(G^c_{n, K,t}) = 0
	.\] 
\end{lemma}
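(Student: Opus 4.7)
The plan is to union-bound $G_{n,K,t}^c$ over the four defining conditions \eqref{eq:good-event-1}--\eqref{eq:good-event-4} and send $n \to \infty$ followed by $K \to \infty$. Conditions \eqref{eq:good-event-1} and \eqref{eq:good-event-2} will be handled by invoking preliminary results directly, while \eqref{eq:good-event-3} and \eqref{eq:good-event-4} will follow from a concentration estimate on each $(y, i)$ combined with a crude union bound; the threshold $\log^2 n$ is chosen precisely because it yields superpolynomial decay that easily absorbs the $O(K^2 n)$ size of the index set.

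For \eqref{eq:good-event-1}, Proposition \ref{prop: tightness} asserts tightness of both $S_{\lfloor nt\rfloor}/\sqrt{n}$ and $I_{\lfloor nt\rfloor}/\sqrt{n}$, so $\sup_{k \le nt}|X_k|/\sqrt{n}$ is tight and its tail probability vanishes as $K \to \infty$ uniformly in large $n$. Condition \eqref{eq:good-event-2} is essentially the statement of Lemma \ref{lm: uniform control of local time}, with the supremum taken over $y \in \mathbb{Z}$ rather than $y \ge 0$ (the two-sided version follows by symmetry at the cost of a factor of two).

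For \eqref{eq:good-event-3}, I would use the identity $\mathscr{D}^{(y)}_{\tau_i^{\mathscr{B},y}} = \tau_i^{\mathscr{B},y} - 2i$, which holds because $\mathscr{B}^{(y)}_{\tau_i^{\mathscr{B},y}} = i$ by definition and $\mathscr{R}^{(y)}_{\tau_i^{\mathscr{B},y}} = \tau_i^{\mathscr{B},y} - i$. Applying Lemma \ref{lm: concentration inequality} with $m = \sqrt{i}\log^2 n$ and $k = i$ gives
\[
\mathbb{P}\bigl(|\tau_i^{\mathscr{B},y} - 2i| \ge \sqrt{i}\log^2 n\bigr) \;\le\; C \exp\bigl(-c\,\min(\log^4 n,\, \sqrt{i}\log^2 n)\bigr) \;\le\; C e^{-c \log^2 n}
\]
uniformly in $i \ge 1$ and $y$. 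A union bound over the $O(K^2 n)$ pairs $(y,i)$ with $|y| \le K\sqrt{n}$ and $1 \le i \le K\sqrt{n}$ yields a total probability of order $K^2 n\, e^{-c\log^2 n}$, which vanishes as $n \to \infty$ for each fixed $K$. The symmetric statement at sites $y < 0$ and $y = 0$ uses the analogous concentration inequality (valid for the slightly different weight sequences in \eqref{eq: generalized weights}, as noted after Lemma \ref{lm: concentration inequality}).

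For \eqref{eq:good-event-4}, Remark \ref{rk:UrnGeo} says each increment $\tau_{i+1}^{\mathscr{B},y} - \tau_i^{\mathscr{B},y}$ is stochastically dominated by a geometric variable whose parameter $q > 0$ is uniform in $i, y, n$, so $\mathbb{P}(\tau_{i+1}^{\mathscr{B},y} - \tau_i^{\mathscr{B},y} \ge \log^2 n) \le (1-q)^{\log^2 n - 1}$. The same union bound over $O(K^2 n)$ pairs gives a vanishing contribution. There is no real obstacle in this lemma; the only point worth care is that the urn processes $\{(\mathscr{B}^{(y)}, \mathscr{R}^{(y)})\}_{y}$ are defined intrinsically on the probability space (independent across $y$, and independent of whether the walk ever reaches $y$), so the probability estimates for \eqref{eq:good-event-3}--\eqref{eq:good-event-4} may be carried out without reference to the walk's range.
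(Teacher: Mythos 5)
Your proposal is correct and follows essentially the same route as the paper: Proposition~\ref{prop: tightness} for \eqref{eq:good-event-1}, Lemma~\ref{lm: uniform control of local time} for \eqref{eq:good-event-2}, Lemma~\ref{lm: concentration inequality} plus a union bound for \eqref{eq:good-event-3}, and Remark~\ref{rk:UrnGeo} plus a union bound for \eqref{eq:good-event-4}. The only cosmetic difference is that you pre-collapse $\min(\log^4 n, \sqrt{i}\log^2 n) \ge \log^2 n$ to get a uniform-in-$i$ bound before the union bound, whereas the paper carries the two cases separately; you also helpfully spell out the identity $\mathscr{D}^{(y)}_{\tau_i^{\mathscr{B},y}}=\tau_i^{\mathscr{B},y}-2i$, which the paper uses implicitly.
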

\begin{proof}
	From the process-level tightness of extrema (Proposition~\ref{prop: tightness}), we know that the probability of event \eqref{eq:good-event-1} goes to $1$ (uniformly in $n$) as $K $ goes to infinity. The probability of the second event \eqref{eq:good-event-2} is controlled by Lemma~\ref{lm: uniform control of local time}. 
	The remaining two events \eqref{eq:good-event-3} and \eqref{eq:good-event-4} encode the asymptotic behavior of P\'{o}lya's urn processes at each site $y$. To estimate the probability of event \eqref{eq:good-event-3}, we use Lemma~\ref{lm: concentration inequality}:
	\begin{align*}
		&1-\mathbb{P}\left(\bigcap_{y = -K \sqrt{n}}^{K \sqrt{n} }\bigcap_{i = 1}^{K \sqrt{n} } \left\{\left| \tau_{i,y}^{\mathscr{B}} - 2 i \right| < \sqrt{ i } \log^2 n \right\}
		\right)
		\le \sum_{y = - K \sqrt{n} }^{K \sqrt{n}}\sum_{i = 1}^{K \sqrt{n}} \mathbb{P}\left( |\tau_{i,y}^{\mathscr{B}} - 2i| \ge \sqrt{i} \log^2 n \right) \\
		&\hspace*{16em} \le CK \sqrt{n} \sum_{i = 1}^{K \sqrt{n}} \exp\left( - c \frac{i \log^4 n}{\sqrt{i}  \log^2 n \vee i} \right)  \\
		&\hspace*{16em} \le CK \sqrt{n}  \sum_{i = 1}^{K \sqrt{n}}
		\left( \exp\left( - c \sqrt{i}  \log^2 n \right)  +
		\exp\left( - c \log^4 n \right) \right),
	\end{align*}
	and the last line goes to zero as $n$ goes to infinity, for any fixed $K>0$. 

	To control event \eqref{eq:good-event-4}, we recall Remark~\ref{rk:UrnGeo} and have
	\[
	\mathbb{P}\left(\bigcup_{y = -K \sqrt{n}}^{K \sqrt{n} }\bigcup_{i = 1}^{K \sqrt{n}}\left\{\left| \tau_{i+1,y}^{\mathscr{B}} - \tau_{i,y}^{\mathscr{B}} \right| \ge  \log^2 n \right\}\right) 
	\le C K^2 n \exp\left( - c \log^2 n \right) 
	,\] 
	which also goes to zero as $n$ goes to infinity, for any fixed $K$. We thus conclude that the probability of $G^c_{n, K, t}$ goes to zero as we first take $n$ to infinity and then take $K$ to infinity.
\end{proof}

In the next lemma, on $G_{n, K, t}$, we obtain bounds of local drifts. This allows us to apply martingale concentration inequalities to estimate accumulated drift, in the proof of Lemma~\ref{lm: approx local drift by conditional means}.
\begin{lemma}\label{lm:lipchitz-bound-on-good-event}
	For all  $y \ge x$, on $G_{n, K}^{(x,m)}(y)$, when $p \in (0,\frac{1}{2}]$,  we have
	\begin{align*}
		\left| \Delta_y^{(x,m)} \right| &\le C_K n^{-\frac{1}{2}p + \frac{1}{4}} \log^4 n &&\text{when }p \in \left(0,\frac{1}{2}\right)\\
		\left| \Delta_y^{(x,m)} \right| &\le C_K \log^5 n &&\text{when }p = \frac{1}{2}
		.\end{align*}
	As a corollary, these bounds also apply to $\left| \Delta_y^{(x,m)} - \rho_y^{(x,m)} \right| $, up to a fixed multiplicative constant.
\end{lemma}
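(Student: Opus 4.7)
The plan is to use the explicit identity from Lemma~\ref{lm: identities for Del, rho}:
\[
\Delta_y^{(x,m)} = \sum_{l=0}^{L-1} \sum_{j=\tau^{\mathscr{B}}_{l,y}}^{\tau^{\mathscr{B}}_{l+1,y}-1} \frac{r(j-l) - b(l)}{r(j-l) + b(l)}, \qquad L:=\mathcal{E}^{(x,m)}_{y,-},
\]
and bound each summand deterministically on $G_{n,K}^{(x,m)}(y)$. Since the denominator $r+b$ is bounded below by a positive constant, the task reduces to bounding the numerator $|w(2(j-l)) - w(2l+1)|$ using two complementary estimates: the crude bound $|w(2m) - w(2l+1)| \leq C(l\vee 1)^{-p}$, valid for all $m \geq l$ from $|w(n)-1| = O(n^{-p})$; and the refined bound $|w(2m)-w(2l+1)| \leq C|m-l|/l^{p+1}$, valid when $m$ is close to $l$, obtained by telescoping with $|w(k+1) - w(k)| = O(k^{-p-1})$ (one checks that the $O(k^{-1-\kappa})$ remainder from \eqref{eq: asymptotics of w} is dominated in our regime for all $p \leq \tfrac12$ and $\kappa > 0$).

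On $G_{n,K}^{(x,m)}(y)$ we have $L < K\sqrt{n}$, $|\tau^{\mathscr{B}}_{l,y} - 2l| < \sqrt{l}\log^2 n$, and $N_l := \tau_{l+1,y}^\mathscr{B} - \tau_{l,y}^\mathscr{B} < \log^2 n$. Hence $|(j-l) - l| \leq \sqrt{l}\log^2 n + \log^2 n$ for $j$ in the inner sum, and for $l \geq L_0 := 4 \log^4 n$ the refined bound yields $|r(j-l) - b(l)| \leq C \log^2 n / l^{p+1/2}$. Combining with $N_l < \log^2 n$, the inner sum obeys
\[
\left| \sum_{j=\tau_{l,y}^{\mathscr{B}}}^{\tau_{l+1,y}^{\mathscr{B}}-1} \frac{r(j-l)-b(l)}{r(j-l)+b(l)} \right| \leq
\begin{cases}
C \log^2 n & \text{if } l = 0,\\
C \log^2 n \cdot l^{-p} & \text{if } 1 \leq l < L_0,\\
C \log^4 n \cdot l^{-p-1/2} & \text{if } L_0 \leq l < L.
\end{cases}
\]

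Summing over $0 \leq l < L \leq K\sqrt{n}$: the middle regime contributes $O(\log^{6-4p} n)$, which is $O(\log^4 n)$ at $p=\tfrac12$; the last regime contributes $O_K(\log^5 n)$ when $p=\tfrac12$ and $O_K(\log^4 n \cdot n^{1/4 - p/2})$ when $p<\tfrac12$. These give the stated bound on $|\Delta_y^{(x,m)}|$, with the last regime dominating for $p < \tfrac12$. For the corollary on $|\Delta_y^{(x,m)} - \rho_y^{(x,m)}|$, the representation \eqref{eq: conditional mean in GPU represenetation} combined with T\'oth's identity (Lemma~\ref{lm: Toth's Identity}) shows $\rho_y^{(x,m)} = \mathbb{E}[\mathscr{D}_{\tau^\mathscr{B}_{k,y}}]$ is uniformly bounded (indeed it converges to $\pm\gamma$), so $|\Delta - \rho| \leq |\Delta| + O(1)$, which is absorbed into a multiplicative constant.

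The main delicate point is matching the $\log^5 n$ bound at $p=\tfrac12$: this requires using the crude bound $|r-b| \leq C l^{-p}$ (rather than the trivial bound $\leq C$) on the intermediate range $1 \leq l < L_0$; without this refinement one picks up an extra $\log n$ factor yielding only $\log^6 n$.
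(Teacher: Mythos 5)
Your strategy is the same as the paper's: expand $\Delta_y^{(x,m)}$ as a double sum, use the good-event constraints to control $L$, $\tau_i^{\mathscr{B}}$, and $\tau_{i+1}^{\mathscr{B}} - \tau_i^{\mathscr{B}}$, split into the regimes $l < L_0 = 4\log^4 n$ and $l \ge L_0$, and sum. The final arithmetic across regimes is correct. However, there is a genuine gap in the way you derive the ``refined bound,'' and it fails precisely in the regime the lemma is required to cover.

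The problem is the telescoping step. You claim $|w(k+1) - w(k)| = O(k^{-p-1})$ and dismiss the $O(k^{-1-\kappa})$ remainder from \eqref{eq: asymptotics of w} as ``dominated.'' But that remainder is not dominated unless $\kappa \ge p$, and the hypotheses of Theorem~\ref{th: main} only assume $\kappa > 0$. Concretely, from \eqref{eq: asymptotics of w} all one gets per step is $|w(k+1)-w(k)| = O(k^{-p-1} + k^{-1-\kappa})$, and telescoping over $|m - l| \lesssim \sqrt{l}\log^2 n$ steps produces a contribution of size $|m-l|\,l^{-1-\kappa} \lesssim l^{-1/2-\kappa}\log^2 n$. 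Multiplying by the inner-sum length $\log^2 n$ and summing over $l \le L \le K\sqrt n$ yields a term of order $\log^4 n \cdot n^{(1/2-\kappa)/2}$ whenever $\kappa < 1/2$, which for $p = 1/2$ swamps the claimed $\log^5 n$ bound, and for $p < 1/2$ swamps $n^{-p/2+1/4}\log^4 n$ whenever $\kappa < p$. The paper avoids this by not telescoping at all: it first rewrites $\frac{a-b}{a+b} = \left(\frac1b - \frac1a\right)\left(\frac1a + \frac1b\right)^{-1}$, reducing everything to $\left|\frac{1}{w(2j)} - \frac{1}{w(2i+1)}\right|$, and then plugs the expansion \eqref{eq: asymptotics of w} into each endpoint separately. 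This way the remainder enters as a single $O(i^{-1-\kappa})$ per term --- not one per unit step --- and its total contribution $\sum_i \log^2 n \cdot i^{-1-\kappa}$ is $O(\log^2 n)$ for any $\kappa > 0$. Your proof needs to be repaired the same way: derive the refined bound from the expansion of $1/w$ evaluated at the two points $2m$ and $2l+1$ directly, not from single-step increments of $w$.

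There is also a minor imprecision in the crude bound: you state it for $m \ge l$, but on $G_{n,K}^{(x,m)}(y)$ the values $j - i = \mathscr{R}_l$ appearing in the inner sum need not satisfy $j - i \ge i$ when $i < 4\log^4 n$ (on the good event one only has $j - i > i/2$ once $i \ge 4\log^4 n$). The correct crude bound is $|w(2m) - w(2l+1)| \le C\big((m\vee 1)^{-p} + l^{-p}\big)$; a slightly more careful accounting (exploiting that the $m$-ranges over consecutive $l$ are essentially disjoint and cover $[0,\mu(L_0)]$) still gives the $O(\log^{6-4p} n)$ estimate you quote, but your stated hypothesis ``$m \ge l$'' does not hold as written. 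The corollary about $|\Delta_y^{(x,m)} - \rho_y^{(x,m)}|$ is argued correctly.
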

\begin{proof}
	We present the proof of this lemma for the case $x > 0$. 
	Calculations in the cases $x = 0$ and $x < 0$ are slightly different but yield the same bound.
	For simplicity of notation, we drop $y$ from the P\'{o}lya's urn processes. We first use Lemma~\ref{lm: identities for Del, rho} to write 
	\begin{align*}
		\left| \Delta_y^{(x,m)} \right| 
		&= 
		\left| 	\sum_{i = 0}^{\mathcal{E}_{y,-}^{(x,m)}} 
		\sum_{l = \tau_i^{\mathscr{B}}} ^{\tau_{i+1}^{\mathscr{B}}  -1}
		\frac{w(2 \mathscr{R}_l)- w(2 \mathscr{B}_l + 1)}{w(2 \mathscr{R}_l)+ w(2 \mathscr{B}_l + 1)}
		\right| .
	\end{align*}

	From the identity $\frac{a - b}{a + b} = (\frac{1}{b} - \frac{1}{a})\cdot(\frac{1}{a} + \frac{1}{b})^{-1}$, and the fact that $w(.)$ is bounded, this is controlled by
	\begin{align*}
		\left| \Delta_y^{(x,m)} \right|&\le C_{p, B, \kappa} \sum_{i = 0}^{K \sqrt{n} } \sum_{l = \tau_i^{\mathscr{B}}} ^{\tau_{i+1}^{\mathscr{B}}  -1}
		\left| \frac{1}{w(2 \mathscr{R}_l)} - \frac{1}{w(2 \mathscr{B}_l + 1)} \right|  \\
		&= C_{p, B, \kappa} \sum_{i = 0}^{K \sqrt{n} } \sum_{l = \tau_i^{\mathscr{B}}} ^{\tau_{i+1}^{\mathscr{B}}  -1}
		\left| \frac{1}{w(2 l - 2 i)} - \frac{1}{w(2i + 1)} \right|  \\
		&= C_{p, B, \kappa} \sum_{i = 0}^{K \sqrt{n} }
		\sum_{j = \tau_i^{\mathscr{B}} - i}^{\tau_{i+1}^{\mathscr{B}} - i - 1} \left| \frac{1}{w(2j)} - \frac{1}{w(2i + 1)} \right|  \\
		&= C_{p, B, \kappa} \sum_{i = 0}^{K \sqrt{n} }
		\sum_{j = \tau_i^{\mathscr{B}} - i}^{\tau_{i+1}^{\mathscr{B}} - i - 1} \left|  2^p B\left( \frac{1}{(2j)^p} - \frac{1}{(2i + 1)^p} \right)  + O\left( \frac{1}{i^{\kappa + 1}} \right) + O\left( \frac{1}{j^{\kappa + 1}} \right)\right|  .\\
		\intertext{
			In view of \eqref{eq:good-event-6} and \eqref{eq:good-event-7}, we bound $\tau_{i}^{\mathscr{B}}-i$ below by $i - \sqrt{i} \log^2 n$ and bound $\tau_{i+1}^{\mathscr{B}} - i - 1$ above by $(i + 1) + \sqrt{i + 1} \log^2 n$. For $i \geq 16 \log^4 n$ we have 
			$[i - \sqrt{i} \log^2 n, i + 1 + \sqrt{i + 1} \log^2 n] \subset [i - 2 \sqrt{i} \log^2 n, i + 2 \sqrt{i} \log^2 n]$. Therefore,
		}
		&\le C_{p, B, \kappa} \left(\log^4 n +\sum_{i = 16 \log^4 n}^{K \sqrt{n} } \log^2 n \cdot \sup_{
			|j - i| \le 2 \sqrt{i}  \log^2 n
		} \left|  \frac{1}{(2j)^p} - \frac{1}{(2i + 1)^p} + O\left( \frac{1}{i^{\kappa + 1}} \right) \right|\right)\\
		&\le C_{p, B, \kappa} \log^2 n \cdot \left( \log^2 n + \sum_{i = 16 \log^4 n}^{K \sqrt{n} } \left( 
		\frac{4 \sqrt{ i } \log^2 n }{\left(2 i - 4 \sqrt{ i } \log^2 n\right)^{p + 1}} 
		+ O\left( \frac{1}{i^{\kappa + 1}} \right)
		\right) \right)  \\
		&\le C_{p, B, \kappa}' \left(\log^4 n + \sum_{i = 16 \log^4 n}^{K \sqrt{n} } i^{- p - \frac{1}{2}} \log^4 n + i^{- \kappa - 1 } \log^2 n \right)
		.\end{align*} 
	For $\kappa>0$, this gives us the desired bound.
\end{proof}

\subsection{Control of Martingale Terms, Lemma~2.1} 
\begin{proof}[Proof of Lemma~\ref{lm: control of martingale}]
	As remarked earlier in section 2, to control quadratic variation, we only need to show that
	\[
	\lim_{N \to \infty } \frac{1}{N} \sum_{k = 0}^{N-1} \mathbb{E}\left[ X_{k+1} - X_k | \mathcal{F}_k \right]^2 = 0
	\quad \text{in probability}
	.\] 
	We first decompose the sum in time to a spatial sum of local (squared) accumulated drifts:
	\begin{align}
		&\sum_{k = 0}^{N-1} \mathbb{E}\left[ X_{k+1} - X_k | \mathcal{F}_k \right]^2
		\notag
		\\
		&= \sum_{x \in \left[ I_N, S_N \right]} \sum_{j = 0}^{L(x, N) - 1} \mathbb{E}\left[ \mathscr{D}_{j+1}^{(x)} - \mathscr{D}_j^{(x)} | \mathcal{F}_{j}^{\mathscr{B}, \mathscr{R}} \right]^2  
		\label{eq:lem-martingale-1}
		.\end{align}
	In the calculation below we let $C_{\alpha_1, \alpha_2, \dots}$ denote a constant depending on  $\alpha_1, \alpha_2, \dots$ but not on $n$, and $C_{\alpha_1, \alpha_2, \dots}$ may change from line to line.
	The inner sum in \eqref{eq:lem-martingale-1} is bounded by
	\begin{align*}
		\sum_{j =0}^{ L(x, N) - 1} \mathbb{E}\left[ \mathscr{D}_{j+1}^{(x)} - \mathscr{D}_j^{(x)} | \mathcal{F}_{j}^{\mathscr{B}, \mathscr{R}} \right]^2 \stackrel{(x > 0)}{\le} C_{p, B, \kappa} \sum_{i = 0}^{\mathscr{B}_N} \sum_{l = \tau_i^{\mathscr{B}}}^{\tau_{i+1}^{\mathscr{B}}-1}
		\left| \frac{1}{w(2 \mathscr{R}_l)} - \frac{1}{w\left( 2 \mathscr{B}_l + 1 \right) } \right|^2 
		.
	\end{align*}
	To control this sum, we consider two cases depending on whether $i \ge 16 \log^4 n$ or not. For the case $i \ge 16 \log^4 n$, we have on the event $G_{n, K, t}\cap \{N \le \lfloor nt \rfloor\}$, that $|(2 l - 2 i) - (2 i + 1)| \leq 4 \sqrt{i} \log^2 n + 1$ for any $l \in [\tau_i^\mathscr{B}, \tau_{i+1}^\mathscr{B}]$, thus
	\begin{align*}
		&\sum_{i = 16 \log^4 n}^{\mathscr{B}_N} \sum_{l = \tau_i^{\mathscr{B}}}^{\tau_{i+1}^{\mathscr{B}}-1} 
		\left| \frac{1}{w(2 \mathscr{R}_l)} - \frac{1}{w\left( 2 \mathscr{B}_l + 1 \right) } \right|^2 \\
		&\leq \sum_{i = 16 \log^4 n}^{\mathscr{B}_N} \log^2 n \sup_{|l - 2 i| \le 2 \sqrt{i} \log^2 n} 
		\left| \frac{1}{w(2 l - 2 i)} - \frac{1}{w\left(2 i + 1 \right) } \right|^2 \\
		&\leq \sum_{i = 16 \log^4 n}^{\mathscr{B}_N} \log^2 n \sup_{|l - 2 i| \le 2 \sqrt{i} \log^2 n} 
		C_{p, B, \kappa} \left| (2 l - 2 i)^{-p} - (2 i + 1)^{-p} + \frac{1}{i^{1 + \kappa}} \right|^2 \\
		&\leq C_{p, B, \kappa} \sum_{i = 16 \log^4 n}^{\mathscr{B}_N} \log^2 n  
		 \left| (4 \sqrt{i} \log^2 n + 1)(2 i - 4 \sqrt{i} \log^2 n)^{- p - 1} + \frac{1}{i^{1 + \kappa}} \right|^2 \\
		&\leq C_{p, B, \kappa} \sum_{i = 16 \log^4 n}^{\mathscr{B}_N} \log^2 n \; i^{- 2 p - 1} \log^4 n \\
		&\leq C_{p, B, \kappa} \log^6 n 
			\left[(16 \log^4 n)^{- 2 p} - (\mathscr{B}_N)^{-2p}\right] \\
		&\leq C_{p, B, \kappa} (\log n)^{6 - 8 p}
		.
	\end{align*}
	When $i < 16 \log^4 n$, on $G_{n, K, t}$, we have
	\begin{align*}
		&\sum_{i = 0}^{16 \log^4 n} \sum_{l = \tau_i^{\mathscr{B}}}^{\tau_{i+1}^{\mathscr{B}}-1} 
		\left| \frac{1}{w(2 \mathscr{R}_l)} - \frac{1}{w\left( 2 \mathscr{B}_l + 1 \right) } \right|^2 
		\leq C_{p, B, \kappa} \log^4 n \; \log^2 n
		.
	\end{align*}
	In the cases $(x=0)$ and $(x < 0)$ we have exactly the same bound with minor differences in calculation.

	Thus, we can bound \eqref{eq:lem-martingale-1} on $G_{n, K, t}$ by
	\begin{equation*}
		\sum_{x \in \left[ I_N, S_N \right]} \sum_{i = 0}^{L(x,N) - 1} \mathbb{E}\left[ \mathscr{D}_{i+1}^{(x)} - \mathscr{D}_i^{(x)} | \mathcal{F}_{i}^{\mathscr{B}, \mathscr{R}} \right]^2 
		< \sum_{x \in \left[ I_N, S_N \right]} C_{p, B, \kappa} \log^6 n.
	\end{equation*}
	Finally we can take $n = N, t = 1$ and control the probability using the typical event $G_{N, K, 1}$ :
	\begin{align*}
		&\mathbb{P}\left( \left| \frac{1}{N} \sum_{k = 0}^{N-1} \mathbb{E}\left[ X_{k+1} - X_k | \mathcal{F}_k \right]^2  \right|  > \varepsilon \right)\\
		&\le \mathbb{P}\left( \left| \sum_{x \in \left[ I_N, S_N \right]} C_{p, B, \kappa} \log^6 N  \right| > \varepsilon  N \right) + \mathbb{P}\left( G_{N, K, 1}^c \right)  \\
		&\le \mathbb{1}\left(  \left| \sum_{|x| \le K \sqrt{N} } C_{p, B, \kappa} \log^6 N \right| > \varepsilon  N  \right) + \mathbb{P}\left( G_{N, K, 1}^c \right)  \\
		&\le \mathbb{1}\left(  C_{p, B, \kappa, K} \, \sqrt{N} \log^6 N > \varepsilon  N  \right) + \mathbb{P}\left( G_{N, K, 1}^c \right) 
		.\end{align*}
	For any $\epsilon > 0$ the first term vanishes trivially for large enough $N$. The second term goes to zero as $N$ goes to infinity, by Lemma~\ref{lm:good-event}. This completes the proof of the lemma.
\end{proof}

\subsection{Convergence of Conditional Expectation, Lemma~2.3}
\label{sec:RhoGamma}
Recall from Lemma~\ref{lm: identities for Del, rho}, for a site $y> x$, 
on the event $\{\mathcal{E}^{(x,m)}_{y-1,+} +\mathbb{1}_{\left\{1\leq y\leq x\right\}} = l\}$, we have
\[
\rho^{(x,m)}_y = \mathbb{E}\left[\mathscr{D}_{\tau_l^{\mathscr{B}}}^{(y)}\right]
.\]
On one hand, $S_{\left\lfloor n t\right\rfloor} \leq K\sqrt{n} $ for some $K>0$ with a high probability; on the other hand, Lemma \ref{lm: number of rarely visit sites} says that, up to $n^b$ sites (where $\frac{\gamma \vee 0}{2}<b<\frac{1}{2}$), with high probability, every site $y$ between $X_k=x$ and $\mathcal{M}^{(x,m)} =S_{k}$ satisfies $ \mathcal{E}^{(x,m)}_{y-1,+} \geq M $. To show Lemma \ref{lm: approximation of means of local drift}, it suffices to show 
\begin{equation}\label{eq: convergence of conditional expectation}
	\lim_{M\to\infty} \mathbb{E}[\mathscr{D}_{\tau_M^{\mathscr{B}}}] = \gamma , 
\end{equation} 
for positive sites. This is shown in Lemma \ref{lm: convergence of mean of discrepancies} below, and a symmetric argument after the proof of Lemma~\ref{lm: convergence of mean of discrepancies} takes care of the sites $y<0$ and $y=0$, matching the factor $\text{sgn}(y)$ in Lemma \ref{lm: approximation of means of local drift}.

\begin{lemma} \label{lm: convergence of mean of discrepancies}
	Assume $w(.)$ satisfies \eqref{eq: asymptotics of w}.
	For the generalized P\'{o}lya urn process $(\mathscr{B}_{k},\mathscr{R}_{k})$ with weights $r(i)= w(2i)$, $b(i) = w(2i+1)$ for all $i\geq 0$, we have that
	\[
	\lim_{M\to\infty} \mathbb{E}[\mathscr{D}_{\tau_M^{\mathscr{B}}}] = \gamma. 
	\]
\end{lemma}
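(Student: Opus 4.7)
The plan is to deduce the lemma from T\'{o}th's identity \eqref{eq: Toth's Identity 1} via a first-order Taylor argument. Note first that at time $\tau_M^{\mathscr{B}}$ one has $\mathscr{B}_{\tau_M^{\mathscr{B}}} = M$ and $\mathscr{R}_{\tau_M^{\mathscr{B}}} = \mu(M)$, so $\mathscr{D}_{\tau_M^{\mathscr{B}}} = \mu(M) - M$, and the task reduces to showing $\mathbb{E}[\mu(M) - M] \to \gamma$.

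Applying \eqref{eq: Toth's Identity 1} with $r(j) = w(2j)$ and $b(j) = w(2j+1)$ gives $\mathbb{E}[U_1(\mu(M))] = V_1(M)$. Subtracting $U_1(M)$ from both sides,
$$\mathbb{E}[U_1(\mu(M)) - U_1(M)] = V_1(M) - U_1(M) \longrightarrow \gamma$$
by the definition \eqref{eq: gamma} of $\gamma$. Writing $U_1(n) = n + \phi(n)$ with $\phi(n) := \sum_{j=0}^{n-1}(w(2j)^{-1} - 1)$, and setting $\delta_M := w(2M)^{-1} - 1 = B M^{-p} + O(M^{-1-\kappa})$, a first-order Taylor expansion of $\phi$ around $M$ converts the above into
$$(1 + \delta_M)\, \mathbb{E}[\mu(M) - M] \;=\; \gamma + o(1) - \mathrm{Rem}(M),$$
where $\mathrm{Rem}(M) := \mathbb{E}[\phi(\mu(M)) - \phi(M) - \delta_M (\mu(M) - M)]$ collects the second-order remainder. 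Since $\delta_M \to 0$, the lemma will follow once I establish $\mathrm{Rem}(M) \to 0$.

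The hard part will be bounding $\mathrm{Rem}(M)$, and this is precisely where the regime $p \le \tfrac{1}{2}$ shows its teeth: $\delta_M = B M^{-p}$ decays too slowly to be absorbed into a naive error bound, so the first-order term must be kept and $\mathbb{E}[\mu(M) - M]$ solved for self-consistently as above. On the high-probability event $A_M := \{|\mu(M) - M| \le M/2\}$, the asymptotic \eqref{eq: asymptotics of w} combined with the mean value theorem gives $|w(2j)^{-1} - w(2M)^{-1}| \le C M^{-p-1}|j - M| + C M^{-1-\kappa}$ for $j \in [M/2, 3M/2]$, leading after summation to the pointwise bound
$$\bigl|\phi(\mu(M)) - \phi(M) - \delta_M(\mu(M) - M)\bigr| \;\le\; C M^{-p-1}(\mu(M) - M)^2 + C M^{-1-\kappa}|\mu(M) - M|.$$
The concentration inequality Lemma~\ref{lm: concentration inequality} supplies $\mathbb{E}[(\mu(M) - M)^2] \le C M$ and $\mathbb{E}[|\mu(M) - M|] \le C \sqrt{M}$, so the contribution from $A_M$ to $|\mathrm{Rem}(M)|$ is at most $O(M^{-p}) + O(M^{-1/2-\kappa}) = o(1)$. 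On $A_M^c$ the same lemma gives super-polynomial decay of $\mathbb{P}(A_M^c)$, which, combined with the polynomial growth $|\phi(n)| \le C(1 + n^{1-p})$ (or $C\log(1+n)$ when $p = 1$) and polynomial moments of $\mu(M)$ afforded by Remark~\ref{rk:UrnGeo}, makes the tail contribution also negligible via Cauchy--Schwarz. Putting both regimes together yields $\mathrm{Rem}(M) \to 0$, hence $\mathbb{E}[\mu(M) - M] \to \gamma$.
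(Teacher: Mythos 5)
Your proof is correct and follows essentially the same route as the paper: apply T\'{o}th's identity \eqref{eq: Toth's Identity 1}, subtract $U_1(M)$, isolate the main term $\frac{1}{r(M)}\mathbb{E}[\mu(M)-M]$ (which you write as $(1+\delta_M)\mathbb{E}[\mu(M)-M]$, with $1+\delta_M = w(2M)^{-1} = r(M)^{-1}$), and kill the remainder via Lemma~\ref{lm: concentration inequality}. The only cosmetic difference is the choice of cut-off in the good event ($|\mu(M)-M|\le M/2$ rather than $A\sqrt{M}\log M$) and the use of second-moment/Cauchy--Schwarz bounds instead of a direct tail sum, both of which yield the same conclusion.
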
 
\begin{remark}
	Lemma \ref{lm: convergence of mean of discrepancies} is a completion of Lemma~2.4 of \cite{KMP23} dealing with the case ${p \in (\frac{1}{2}, 1]}$, where the local drift converges absolutely, allowing a simpler argument. However, when $p \in (0,\frac{1}{2}]$, the local drift is not absolutely summable in general, requiring additional care.
\end{remark}
\begin{proof} 
	Recall identities \eqref{eq: gamma} and \eqref{eq: Toth's Identity 1}. 
	Recall urn weights $r(i)$ and $b(i)$ in \eqref{eq: generalized weights}.
	For any integer $m > 0$ sufficiently large, we have
	\begin{align}
		V_1(m) - U_1(m) =& \sum_{i=0}^{m-1} \frac{1}{w(2i+1)} -\sum_{i=0}^{m-1} \frac{1}{w(2i)} 
		\notag \\
		=& \sum_{i=0}^{m-1} \frac{1}{b(i)} -\sum_{i=0}^{m-1} \frac{1}{r(i)} 
		\notag \\
		=& 	\mathbb{E}\left[  \sum_{j=0}^{ \mu(m)-1 } \frac{1}{r(j)}   \right] - \sum_{i=0}^{m-1} \frac{1}{r(i)} = \mathbb{E}\left[  \sum_{j=0}^{ \mu(m)-1 } \frac{1}{r(j)}    - \sum_{i=0}^{m-1} \frac{1}{r(i)}\right]. \label{eq: difference}
	\end{align}
	From the definition of $w(\cdot)$ in \eqref{eq: asymptotics of w}, we have that $0< \inf \frac{1}{r(j)} \leq \sup \frac{1}{r(j)} <\infty $, then $\mathbb{E}\left[\mu(m)\right]$ is bounded by
	$$\mathbb{E}\left[ \mu(m) \right] = \mathbb{E}\left[ \mu(m)\mathbb{1}_{\left\{\mu(m)\geq 1 \right\} } \right] \leq  \frac{1}{\inf 1/w(j) }\mathbb{E}\left[  \sum_{j=0}^{ \mu(m)-1 } \frac{1}{r(j)}   \right] <\infty, $$ 
	hence $ \mathbb{E}\left[ \mu(m) -m\right]  $ is finite.
	The difference in \eqref{eq: difference} can be written as
	\begin{align} 
		\sum_{j=0}^{ \mu(m)-1 } \frac{1}{r(j)} - \sum_{i=0}^{m-1} \frac{1}{r(i)} =& \sum_{j=m}^{\mu(m)-1} \left(\frac{1}{r(j)} -\frac{1}{r(m)} \right) \cdot\mathbb{1}_{\left\{\mu(m)\geq m\right\}} 
		\label{eq: 1st term}
		\\	
		& - \sum_{j=\mu(m)}^{m-1} \left(\frac{1}{r(j)} -\frac{1}{r(m)} \right) \cdot \mathbb{1}_{\left\{\mu(m)< m\right\}} 
		\label{eq: 2nd term}
		\\
		& + \frac{\mu(m)-m}{ r(m) }. \label{eq: major term}
	\end{align} 

	Since $\mu(m)-m = \mathscr{D}_{\tau^{\mathscr{B}}_m}$, the last term \eqref{eq: major term} is exactly $\frac{1}{r(m)} \mathscr{D}_{\tau^{\mathscr{B}}_m}$, which has an expectation $\frac{1}{r(m)} \mathbb{E}\left[\mathscr{D}_{\tau^{\mathscr{B}}_m}\right]$. 
	
	Let $A> \frac{2}{c} \vee 1$, where $c$ is from Lemma \ref{lm: concentration inequality}. \eqref{eq: 1st term} is bounded by
	\begin{align*}
		& \sum_{j=m}^{\infty} \abs{\frac{1}{r(j)} -\frac{1}{r(m)} } \cdot \mathbb{1}_{\left\{\mu(m)\geq j \right\}}\\
		& \leq  \sum_{0\leq j-m \leq A \sqrt{m}\log m } \abs{\frac{1}{r(j)} -\frac{1}{r(m)} } \cdot \mathbb{1}_{\left\{\mu(m)\geq j \right\}}
		+  \sum_{j-m > A\sqrt{m}\log m } \abs{\frac{1}{r(j)} -\frac{1}{r(m)} } \cdot \mathbb{1}_{\left\{\mu(m)\geq j \right\}}
		\notag
		\\
		&\leq  \sum_{0\leq j-m \leq A\sqrt{m}\log m } \abs{\frac{1}{r(j)} -\frac{1}{r(m)} }
		\quad +\quad 2\left(\sup_j \frac{1}{w(j)}\right) \cdot \sum_{j\geq m + A\sqrt{m}\log m } \mathbb{1}_{\left\{ \mu(m) > j \right\}}
		\label{eq: low large difference}
	\end{align*}
	By \eqref{eq: asymptotics of w}, there is a constant $C'>0$ such that for any $m$ sufficiently large and any $j$ with $\abs{j-m}\leq A \sqrt m \log m $, 
	\[ \abs{\frac{1}{r(j)} -\frac{1}{r(m)} } \leq C' A m^{-p-\frac{1}{2}} \log m, \]
	which implies
	\[
	\sum_{0\leq j-m \leq A\sqrt{m}\log m } \abs{\frac{1}{r(j)} -\frac{1}{r(m)} } \le 
	C' A^2 m^{-p} (\log m)^2.
	\] On the other hand, Lemma \ref{lm: concentration inequality} implies that
	\begin{align*}
		2\left(\sup_j \frac{1}{w(j)}\right) \mathbb{E}\left[\sum_{j\geq m + A\sqrt{m}\log m } \mathbb{1}_{\left\{ \mu(m) > j \right\}} \right]
		\le & 2\left(\sup_j \frac{1}{w(j)}\right) \sum_{j-m \geq A \sqrt m \log m  } \mathbb{P}( \mathscr{D}_{\tau^{\mathscr{B}}_m} \geq j-m )  
		\notag 
		\\
		\leq& 2\left(\sup_j \frac{1}{w(j)}\right) \sum_{l \geq A \sqrt m \log m } C \exp\left( - \frac{c  \cdot l^2}{l \vee m}   \right)
		\notag\\
		\leq& C'' \left( \exp (- cA^2 \cdot \log m ) + \exp(-cA \cdot \log m) \right), 
	\end{align*} for some $C''$ independent of $m$. Therefore, the expectation of \eqref{eq: 1st term} is bounded by
	\begin{equation}\label{eq: boound}
		C' A^2 m^{-p} \log m + C''  \left( m ^{-cA^2} +  m^{-cA} \right). 
	\end{equation}

	The final term \eqref{eq: 2nd term} can be treated similarly. Therefore,
	$$ \abs{ V_1(m)- U_1(m) -\frac{1}{r(m)}\mathbb{E}\left[ \mathscr{D}_{\tau^{\mathscr{B}}_m} \right] }
	\leq 2C' A^2 m^{-p} \log m + 2C''  \left( m ^{-cA^2} +  m^{-cA} \right), 
	$$ 
	which converges to zero as $m$ goes to infinity. We conclude that 
	\[
	\lim_{m\to\infty}\mathbb{E}\left[ \mathscr{D}_{\tau^{\mathscr{B}}_m} \right] = \gamma, 
	\] 
	from $\lim_{m\to\infty}\frac{1}{r(m)} =1$ and $ \lim_{m\to \infty} \left(V_1(m)-U_1(m) \right) = \gamma$.
\end{proof}

For the generalized P\'{o}lya urn process associated to a site $y<0$, we can use spatial symmetry of urn processes discussed in Remark~\ref{rm:symmetry}. Specifically, we have $r(i) = w(2i+1)$, $b(i) =w(2i)$. \eqref{eq: difference} becomes $U_1(m)-V_1(m)$, which converges to $-\gamma$ as $m$ goes to infinity.

\begin{equation}\label{eq: general expected drift}
	\lim_{m\to\infty}\mathbb{E}\left[ \mathscr{D}_{\tau^{\mathscr{B}}_{m,-1}} \right] = \lim_{m\to\infty}\mathbb{E}\left[ \mathscr{D}_{\tau^{\mathscr{R}}_{m,1}} \right] = -\gamma.
\end{equation}
Similarly, for $y=0$, the associated urn process has 
$\lim_{m\to\infty}\mathbb{E}\left[ \mathscr{D}_{\tau^{\mathscr{B}}_m} \right] = 0.$

%
Now we are ready to show the following more general version of Lemma \ref{lm: approximation of means of local drift}:
\begin{lemma}
	Let $w(.)$ be a positive monotone function on $\mathbb{N}_0$ satisfying \eqref{eq: asymptotics of w}. Then for any $0<p<1$, any $\varepsilon>0$,
	\[
	\lim_{n\to\infty} \mathbb{P}\left( \sup_{k\leq \left\lfloor n t\right\rfloor}  \abs{  	\sum_{y\in \left[X_{k}+1 ,S_{k}\right]} \left( \rho^{(X_k,L(X_k,k)-1)}_y -  \gamma \cdot sgn(y) \right) } \geq  \varepsilon \sqrt{n}     \right) =0.
	\]
	Furthermore,
	\[
	\lim_{n\to\infty} \mathbb{P}\left( \sup_{k\leq \left\lfloor n t\right\rfloor}  \abs{  	\sum_{y\in \left[I_k ,S_{k}\right]} \left( \rho^{(X_k,L(X_k,k)-1)}_y -  \gamma \cdot sgn(y) \right) } \geq  \varepsilon \sqrt{n}     \right) =0.
	\]
\end{lemma}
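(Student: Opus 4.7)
The strategy matches the one sketched in the paragraph preceding the statement: use a threshold $M$ to split the sum into sites that are visited many times, where $\rho_y$ is close to $\gamma \cdot sgn(y)$ by Lemma~\ref{lm: convergence of mean of discrepancies}, and sites that are visited only rarely, which are few by Lemma~\ref{lm: number of rarely visit sites}. Fix $\varepsilon, t>0$ and choose $K$ large enough that $\mathbb{P}\bigl(S_{\lfloor nt\rfloor}^X \ge K\sqrt n\bigr)+\mathbb{P}\bigl(-I_{\lfloor nt\rfloor}^X \ge K\sqrt n\bigr)$ is small uniformly in $n$ (Proposition~\ref{prop: tightness}); on this typical event each of the two sums has at most $2K\sqrt n$ terms. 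By the remark following Lemma~\ref{lm: number of rarely visit sites} (with $[X_{k-1},S_{k-1}]$ and up-crossings $\mathcal{E}^k_{x,+}$ in place of $[I_{k-1},S_{k-1}]$ and $L(x,k-1)$), fix any $b\in(\gamma_+/2,1/2)$: with high probability, for all $k\le\lfloor nt\rfloor$ the number of sites $y\in[X_k+1,S_k^X]$ with $\mathcal{E}^{(X_k,L(X_k,k)-1)}_{y-1,+}\le M$ is at most $4n^b$.

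For the well-visited sites (threshold $\ge M$) in $[X_k+1,S_k^X]$, Lemma~\ref{lm: identities for Del, rho} gives
\[
\rho_y^{(X_k,L(X_k,k)-1)}=\mathbb{E}\bigl[\mathscr{D}^{(y)}_{\tau^{\mathscr{B}}_{l,y}}\bigr]
\]
for some $l\ge M-1$ that is determined by $\mathcal{E}^{(X_k,L(X_k,k)-1)}_{y-1,+}$. Lemma~\ref{lm: convergence of mean of discrepancies} gives the limit $\gamma$ when $y>0$, \eqref{eq: general expected drift} (with Remark~\ref{rm:symmetry}) gives $-\gamma$ when $y<0$, and the same argument with $b(i)=r(i)$ gives $0$ when $y=0$. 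Hence there exists $\delta_M\to 0$ (as $M\to\infty$) such that $\bigl|\rho_y^{(X_k,L(X_k,k)-1)}-\gamma\cdot sgn(y)\bigr|\le \delta_M$ uniformly over all sufficiently visited sites, so the good-site contribution is bounded by $2K\sqrt n\cdot\delta_M$.

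For the rarely-visited sites (threshold $<M$), use \eqref{eq: conditional mean in GPU represenetation} together with Remark~\ref{rk:UrnGeo}: since $\tau^{\mathscr{B}}_{l,y}$ is stochastically dominated by a sum of $l$ i.i.d.\ geometric variables with uniform parameter, $|\mathscr{D}^{(y)}_{\tau^{\mathscr{B}}_{l,y}}|\le \tau^{\mathscr{B}}_{l,y}+l$ has expectation at most $CM$ for $l\le M$, so $|\rho_y^{(X_k,L(X_k,k)-1)}|\le CM$ on the rarely-visited set. The rarely-visited contribution is therefore at most $4n^b(CM+|\gamma|)=o(\sqrt n)$. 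Choosing first $M$ so that $2K\delta_M<\varepsilon/2$ and then $n$ large enough that $4n^b(CM+|\gamma|)<\varepsilon\sqrt n/2$ proves the first displayed bound.

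For the second, stronger statement, decompose $[I_k^X,S_k^X]=[I_k^X,X_k-1]\cup\{X_k\}\cup[X_k+1,S_k^X]$. The right part is just the first statement. The left part is handled by an identical argument applied to the downward branching-like process $\zeta$, using Remark~\ref{rm:symmetry} to swap the roles of $\mathscr{B}$ and $\mathscr{R}$ (and $\gamma$ with $-\gamma$ correspondingly). The single boundary term $y=X_k$ contributes at most a quantity bounded by $\delta_M+C M$ using the same dichotomy, which is $o(\sqrt n)$. The main difficulty of the argument is already discharged into Lemma~\ref{lm: convergence of mean of discrepancies}: when $p\le\tfrac12$ the local drifts are not absolutely summable, and the convergence $\mathbb{E}[\mathscr{D}_{\tau^{\mathscr{B}}_m}]\to\gamma$ had to be extracted from T\'oth's identity rather than from a naive term-by-term comparison. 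Given that input, the present lemma is obtained by clean truncation plus the rarely-visited-site bound.
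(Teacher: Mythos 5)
Your proof is correct and follows essentially the same route as the paper's: split sites by the threshold $M$ on up-crossing numbers, use Proposition~\ref{prop: tightness} and the (extended) Lemma~\ref{lm: number of rarely visit sites} to control the number of terms and the rarely-visited subset, and invoke Lemma~\ref{lm: convergence of mean of discrepancies} together with \eqref{eq: general expected drift} and \eqref{eq: conditional mean in GPU represenetation} to bound the well-visited contributions uniformly. The only cosmetic difference is that the paper bounds the rarely-visited terms by a single uniform constant $C(0)=\sup_m\bigl|\mathbb{E}[\mathscr{D}_{\tau^{\mathscr{B}}_m}]-\gamma\cdot sgn(y)\bigr|<\infty$ rather than your $M$-dependent bound $CM+|\gamma|$; both suffice since the rarely-visited contribution is $O(n^b)=o(\sqrt n)$ for fixed $M$.
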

\begin{proof} There are only three types of weight sequences for the generalized P\'{o}lya urn processes, see \eqref{eq: generalized weights}. Therefore, from Lemma \ref{lm: convergence of mean of discrepancies}, and 
	\eqref{eq: general expected drift}, there is a decreasing function $C(.)$ on $\mathbb{N}_0$ with $\lim_{L\to \infty}C(L) =0$ such that for any $y \in \mathbb{Z}$,
	\begin{equation}\label{eq: uniform convergence}
		\max\left(\abs{\mathbb{E}\left[ \mathscr{D}_{\tau_L^{\mathscr{R}}} \right] + \gamma \cdot sgn(y)}, \abs{\mathbb{E}\left[ \mathscr{D}_{\tau_L^{\mathscr{B}}} \right] - \gamma \cdot sgn(y)} \right)\leq C(L).
	\end{equation} One such function is $C(l) = \sup \left\{  \abs{\mathbb{E}\left[ \mathscr{D}_{\tau_m^{\mathscr{R}}} \right] + \gamma \cdot sgn(y)} + \abs{\mathbb{E}\left[ \mathscr{D}_{\tau_m^{\mathscr{B}}} \right] - \gamma \cdot sgn(y)} : m\geq l \right\}.     $

	Let $t>0$ and $b \in \left[\frac{\gamma \vee 0 }{2},\frac{1}{2}\right)$. For any $n,K,M>0$, we consider two classes of good events corresponding to controls of extrema and rarely visited sites:
	\begin{align*}
		A_{n,K}:=&\left\{ \min\left\{-I_{\left\lfloor n t\right\rfloor}, S_{\left\lfloor n t\right\rfloor}\right\} \geq K \sqrt{n}  \right\}
		\\
		B_{n,M}:=& \left\{  \sup_{k\leq \left\lfloor n t\right\rfloor} \sum_{ y\in (X_{k-1}, S_{k-1}]}  \mathbb{1}_{\left\{ \mathcal{E}^{k-1}_{y-1,+} \leq M  \right\}} >n^b  \right\}.
	\end{align*}
	Clearly, $A_{n,K}$ is decreasing in $K$, and $B_{n,M}$ is increasing in $M$. We claim that for $n$ sufficiently large, the event 
	\[
	F_{n,\varepsilon}:= \left\{ \sup_{k\leq \left\lfloor n t\right\rfloor}  \abs{  	\sum_{y\in [X_{k}+1 ,S_k]} \left( \rho^{(X_k,L(X_k,k)-1)}_y -  \gamma \cdot sgn(y) \right) } \geq  \varepsilon \sqrt{n}    \right\}
	\] 
	is contained in $A_{n,K} \cup B_{n,M} $ for some finite $K, M$ independent of $n$:   
	Indeed, depending on whether $\mathcal{E}^{k-1}_{y-1,+} \leq M$ or not, terms  $\left( \rho^{(X_k,L(X_k,k)-1)}_y -  \gamma \cdot sgn(y) \right)$ are bounded by $C(0)$ or $C(M)$. 
	
	Therefore, we have
	\begin{align*}
		\sup_{k\leq \left\lfloor n t\right\rfloor}  \abs{  	\sum_{y\in [X_{k}+1 ,S_k]} \left( \rho^{(X_k,L(X_k,k)-1)}_y -  \gamma \cdot sgn(y) \right) } \leq &  
		C(0) \cdot \sup_{k\leq \left\lfloor n t\right\rfloor} \left\{   	\sum_{y\in [X_{k}+1 ,S_k]} \mathbb{1}_{ \left\{ \mathcal{E}^{k-1}_{y-1,+} \leq M \right\} } \right\}
		\notag
		\\
		+& C(M) \cdot \sup_{k\leq \left\lfloor n t\right\rfloor} \left\{   	\sum_{y\in [X_{k}+1 ,S_k]} \mathbb{1}_{ \left\{ \mathcal{E}^{k-1}_{y-1,+} \geq M \right\} } \right\},
	\end{align*} which is bounded on $A^c_{n,K} \cap B^c_{n,M}$ by
	\begin{equation}\label{eq: an upper bound on good set}
		C(0)n^b  + C(M) \left(K \sqrt{n} -n^b\right).
	\end{equation} 
	As $n$ goes to infinity, \eqref{eq: an upper bound on good set} is smaller than $\varepsilon \sqrt{n}$ for any $K>0$ and any $M$ such that $C(M) < \frac{\varepsilon}{2K}$. 
	For any such pair of $(K,M)$, $A^c_{n,K} \cap B^c_{n,M} \subset F^c_{n,\varepsilon}$ when $n$ is sufficiently large, and 
	\[
	\limsup_{n\to \infty} \mathbb{P}(F_{n,\varepsilon}) \leq \limsup_{n\to \infty}  \mathbb{P}(A_{n,K}) +  \limsup_{n\to \infty}  \mathbb{P}(B_{n,M}).
	\]
	In view of Lemma \ref{lm: number of rarely visit sites} and the explanation after it, the second limit is zero. The first limit $\limsup_{n\to \infty}  \mathbb{P}(A_{n,K}) $ vanishes as $K$ goes to infinity, which is a consequence of Lemma 2.1 \cite{KMP23}, or Corollary 1A \cite{T96}.
\end{proof}


\subsection{Approximation of Local Drifts by Conditional Means, Lemma 2.4}
\label{sec:DeltaRho}
In this subsection, we prove Lemma~\ref{lm: approx local drift by conditional means} and thus complete the proof of Theorem~\ref{th: main}. This proof is similar to, and slightly more technical than the proof of {Lemma~4.2} in \cite{KP16}. 

In Lemma~\ref{lm: approx local drift by conditional means}, for any fixed $(x,m)$ with $\abs{x},m < K\sqrt{n}$, the sum $\sum_{z=x+1}^{y}  \Delta_z^{(x,m)} - \rho_z^{(x,m)}  $ is a martingale (indexed by $y$). To control the sum, we compare the martingale $\sum_{z=x+1}^{y} \Delta_z^{(x,m)} - \rho_z^{(x,m)}$ to a tempered version $\sum_{z= x+1}^y \tilde \Delta_{z}^{(x,m)} - \tilde\rho_z^{(x,m)}$ that has bounded increments on the good event $G_{n, K, t} \cap \left\{\lambda_{x,m} \leq\lfloor nt \rfloor \right\}$. We define
\[
\tilde \Delta_y^{(x,m)} := \Delta_y ^{(x,m)} \mathbb{1}\left( G_{n, K^2}^{(x,m)} (y)\right), \qquad
\tilde \rho_y^{(x,m)} := \mathbb{E}\left[ \tilde\Delta_y^{(x,m)} \middle| \mathcal{H}_{y-1, +}^{(x,m)} \right]  
.\] 

At $\lambda_{x, m}$, the sum in Lemma~\ref{lm: approx local drift by conditional means} can be decomposed as follows:
\begin{align}
	\label{eq:tempered-difference-0}
	&\sum_{y > x} \Delta_y^{(x,m)} -  \rho_y^{(x,m)}  \\
	\label{eq:tempered-difference-1}
	&= \sum_{y > x} \Delta_y^{(x,m)} - \tilde\Delta_y^{(x,m)} \\
	\label{eq:tempered-difference-2}
	&+ \sum_{y > x} \tilde \Delta_{y}^{(x,m)} - \tilde\rho_y^{(x,m)} \\
	\label{eq:tempered-difference-3}
	&+ \sum_{y > x} \tilde\rho_y^{(x,m)} - \rho_y^{(x,m)} 
	.\end{align}

\begin{proof}[Proof of Lemma~\ref{lm: approx local drift by conditional means}]
	We first control \eqref{eq:tempered-difference-0} on $G_{n, K, t} \cap \left\{\lambda_{x,m} \leq\lfloor nt \rfloor \right\}$. From \eqref{eq:goodgood} and that $G_{n,K}^{(x,m)}(y)$ is increasing in $K$, we get that  \eqref{eq:tempered-difference-1} is zero on $G_{n, K, t} \cap \left\{\lambda_{x,m} \leq\lfloor nt \rfloor \right\}$. 
	
	For \eqref{eq:tempered-difference-2}, it equals to
	$
	\sum_{y > x}^{K\sqrt{n}} \tilde \Delta_{y}^{(x,m)} - \tilde\rho_y^{(x,m)}
	$
	on $G_{n, K, t} \cap \left\{\lambda_{x,m} \leq\lfloor nt \rfloor \right\}$. By Azuma's inequality and Lemma~\ref{lm:lipchitz-bound-on-good-event}, we get that for any $\varepsilon>0$,
	\begin{align}
		\mathbb{P}\left( \left| \sum_{y = x + 1}^{K \sqrt{n} } (\tilde\Delta_y^{(x,m)} - \tilde\rho_y^{(x,m)}) \right| > \varepsilon \sqrt{n}  \right) 
		&
		\label{eq:azuma-drift-martingale}
		\le \begin{cases}
			\exp\left( - C_{K, \varepsilon} \, n^{p } \log^{-8} n \right)
			& 0 < p < \frac{1}{2} \\
			\exp\left( - C_{K, \varepsilon} \, \sqrt{n}  \log^{-10} n \right)
			& p = \frac{1}{2}
		\end{cases}
		.\end{align}
	
	It remains to control  \eqref{eq:tempered-difference-3} on $G_{n, K, t} \cap \left\{\lambda_{x,m} \leq\lfloor nt \rfloor \right\}$:
	\begin{align*}
		\sum_{y > x} \tilde\rho_y^{(x,m)} - \rho_y^{(x,m)}
		&= \sum_{y = x + 1}^{K \sqrt{n} } \mathbb{E}\left[ \Delta_y^{(x,m)}\mathbb{1}\left( G_{n, K^2}^{(x,m)}(y) \right) - \Delta_{y}^{(x,m)} \middle| \mathcal{H}_{y-1, +}^{(x,m)}  \right]  \\
		&= \sum_{y = x + 1}^{K \sqrt{n} } -\mathbb{E}\left[ \Delta_y^{(x,m)}\mathbb{1}\left( \left( G_{n, K^2}^{(x,m)}(y) \right) ^c \right) \middle| \mathcal{H}_{y-1, +}^{(x,m)}  \right]
	.\end{align*}
	Hence,
	\begin{align*}
	&\mathbb{1}(G_{n, K, t}) \left| 
	\sum_{y > x} \tilde\rho_y^{(x,m)} - \rho_y^{(x,m)}
	\right|
	\le \sum_{y = x + 1}^{K \sqrt{n} } \mathbb{1}\left(G_{n, K}^{(x,m)}(y-1)\right) 
	\left|  \mathbb{E}\left[ \Delta_y^{(x,m)}\mathbb{1}\left( \left( G_{n, K^2}^{(x,m)}(y) \right) ^c \right) \middle| \mathcal{H}_{y-1, +}^{(x,m)}  \right] \right| \\
	& \le \sum_{y = x + 1}^{K \sqrt{n} } 
	\sqrt{
	\mathbb{E}\left( \mathbb{1}\left(\left( G_{n, K^2}^{(x,m)}(y) \right) ^c \right)
	\mathbb{1}\left( G_{n, K}^{(x,m)}(y-1) \right) 
	\middle| \mathcal{H}_{y-1, +}^{(x,m)} \right)
	}\\
	&\hspace{18em} \times \sqrt{
	\mathbb{E}\left[ \left(\Delta_y^{(x,m)}\right)^2
	\mathbb{1}\left(G_{n, K}^{(x,m)}(y-1)\right) 
	\middle| \mathcal{H}_{y-1, +}^{(x,m)}  \right]}
	.
	\end{align*}
	Thus, the first expectation is bounded by
	\begin{align*}
		&\mathbb{E}\left[ \mathbb{1}\left(\left( G_{n, K^2}^{(x,m)}(y) \right) ^c \right) \mathbb{1}\left( G_{n, K}^{(x,m)}(y-1) \right) \middle| \mathcal{H}_{y-1, +}^{(x,m)} \right] \\
		&\le \sum_{a = 0}^{K \sqrt{n}} \mathbb{E}\left[ \mathbb{1}\left(\left( G_{n, K^2}^{(x,m)}(y) \right) ^c \right) \mathbb{1}\left( \mathcal{E}_{y-1, +}^{(x,m)} = a \right) \middle| \mathcal{H}_{y-1, +}^{(x,m)} \right] \\
		&\le \sum_{a = 0}^{K \sqrt{n}} \mathbb{P}\left( \left( G_{n, K^2}^{(x,m)}(y) \right) ^c \middle| \mathcal{E}_{y-1, +}^{(x,m)} = a \right) \mathbb{1}\left( \mathcal{E}_{y-1, +}^{(x,m)} = a \right) \\
		&\le \exp\left( - 2 K^2 \sqrt{n} (q - \frac{1}{K}) \right) + C K^2 n \exp\left(- c \log^2 n\right)
		.
	\end{align*}
	To justify the last inequality,
	we use the union bound
	\begin{align*}
	\mathbb{P}\left( \left( G_{n, K^2}^{(x,m)}(y) \right) ^c \middle| \mathcal{E}_{y-1, +}^{(x,m)} 
	= a \right)
	&\le \mathbb{P}\left( \mathcal{E}_{y, +}^{(x,m)} \ge K^2 \sqrt{n} \middle| \mathcal{E}_{y-1, +}^{(x,m)} = a \right) \\
	&+ \mathbb{P}\left( \left\{\left|\tau_{i, y}^{\mathscr{B}}-2 i\right|\ge\sqrt{i} \log ^2 n, \text { some } i \leq K^2 \sqrt{n}\right\} \right) \\
	&+ \mathbb{P}\left( \left\{\left|\tau_{i+1, y}^{\mathscr{B}}-\tau_i^{\mathscr{B}, y}\right|\ge\log ^2 n, \text { some } i \leq K^2 \sqrt{n}\right\} \right)
	\end{align*}
	and control the second and third probabilities as in the proof of Lemma~\ref{lm:good-event} for the complements of \eqref{eq:good-event-6} and \eqref{eq:good-event-7}. For the first probability,
	we use Remark \ref{rk:UrnGeo} and recursive formulas \eqref{eq: recursive formula for upcrossings}, \eqref{eq: source of inhomogeneity}. As increments $\tau^{\mathscr{B}}_{k+1,y} -\tau^{\mathscr{B}}_{k,y}$ are stochastically dominated by independent geometric random variables with parameter $q$ uniformly in $ k \geq 0$ and $y \in \mathbb{Z}$, $\mathcal{E}_{x+1,+}^{(x,m)}$ is stochastically dominated by the sum of $ (\mathcal{E}_{x,+}^{(x,m)} +1) $ i.i.d. geometric random variables with parameter $q$. Then for all sufficiently large $K$, and all integers $a\leq K\sqrt{N}$, we have the following upper bounds for $\mathbb{P}\left( G_{n, K}^{(x,m)}(y-1)\cap \left\{L(y, \lambda_{x, m}) \ge K \sqrt{n}\right\} \middle| \mathcal{E}_{y, +}^{(x,m)}=a \right)$:
	\begin{align*}
		\mathbb{P}\left(\mathcal{E}_{x+1,+}^{(x,m)} > \frac{K^2}{2} \sqrt{n} | \mathcal{E}_{x,+}^{(x, m)} = \lceil K \sqrt{n} \rceil\right)
		&\le \exp\left( - 2 K^2 \sqrt{n}(q - \frac{1}{K})  \right) 
		,
	\end{align*}
	which converges to zero exponentially fast in $K \sqrt{n}$. 
	
	Next, to control the second expectation, we observe from Lemma~\ref{lm: identities for Del, rho} and \eqref{eq: recursive formula for upcrossings} that $\Delta_y^{(x,m)} \in \sigma \left(\mathcal{E}_{y, +}^{(x,m)}, (\tau_{y,i}^{\mathscr{B}})_{i \ge 0}\right)$, and furthermore, $\mathcal{E}_{y, +}^{(x,m)} \in \sigma \left(\mathcal{E}_{y-1, +}^{(x,m)}, (\tau_{y,i}^{\mathscr{B}})_{i \ge 0}\right)$. Hence $\mathbb{E}\left[|\Delta_y^{(x,m)}|^2 \middle| \mathcal{H}_{y-1, +}^{(x,m)}\right] = \mathbb{E}\left[|\Delta_y^{(x,m)}|^2 \middle| \mathcal{E}_{y-1, +}^{(x,m)}\right]$. Therefore,
	\begin{align*}
		& \mathbb{E}\left[ \left(\Delta_y^{(x,m)}\right)^2 \mathbb{1}\left(G_{n, K}^{(x,m)}(y-1)\right) \middle| \mathcal{H}_{y-1, +}^{(x,m)}  \right] \\
		&\le \sum_{a = 0}^{K \sqrt{n}} \mathbb{E}\left[ \left(\Delta_y^{(x,m)}\right)^2 \mathbb{1}\left( \mathcal{E}_{y-1, +}^{(x,m)} = a \right) \middle| \mathcal{H}_{y-1, +}^{(x,m)} \right] \\
		&\le \sum_{a = 0}^{K \sqrt{n}} \mathbb{E}\left[ \left(\Delta_y^{(x,m)}\right)^2 \middle| \mathcal{E}_{y-1, +}^{(x,m)} = a \right] \mathbb{1}\left( \mathcal{E}_{y-1, +}^{(x,m)} = a \right) \\
		&\le C_q K^2 n
	\end{align*}
	To justify the last inequality, note that $\abs{\Delta_{x+1}^{(x,m)}} \leq  \mathcal{E}_{x,+}^{(x,m)}+1 + \mathcal{E}_{x+1,+}^{(x,m)}$,  
	so that $ \left(\Delta_{x+1}^{(x,m)} \right)^2$ is stochastically dominated by a sum of $\left(\mathcal{E}_{x,+}^{(x,m)}+1\right)^2$ products. Hence,
	\begin{align*}
		\mathbb{E}\left[ \left(\Delta_{x+1}^{(x,m)}\right)^2 \middle| \mathcal{E}_{x,+}^{(x,m)} = a  \right]  \leq C_q a^2
	\end{align*}
	for some $C_q$ depending only on $q$.

	%
	Therefore, on $G_{n,K,t} \cap \left\{ \lambda_{x,m} \leq\lfloor nt \rfloor \right\}$,
	\begin{equation}\label{eq: difference of cond means}
		\left| \sum_{y > x} \tilde\rho_y^{(x,m)} - \rho_y^{(x,m)} \right| \le 
		C_q K^2 n \exp \left( - c \log^2 n \right) 
	\end{equation}
	which is smaller than $\frac{\varepsilon \sqrt{n}}{2}$ for $n$ sufficiently large.
	
	

Finally, to prove the lemma, we use a union bound by considering possible values $(x,m)$ in the range of $\left(X_k, L\left(X_k, \left\lfloor nt  \right\rfloor - 1\right)\right)$. Note that if $G_{n,k,t}$ occurs, for any $k\leq \lfloor nt \rfloor$, there is a pair $(x,m)$ with $\lambda_{x,m}=k$ such that $\abs{x},m <K\sqrt{n}$. Therefore, we get from \eqref{eq:azuma-drift-martingale} and \eqref{eq: difference of cond means} that 
\begin{align*}
	& \mathbb{P}\left( \sup_{k <\lfloor nt \rfloor} \left| \sum_{y > X_k} 
	\Delta_y^{\left(X_k,L(X_k, k)-1\right)} - \rho_y^{\left(X_k,L(X_k, k)-1\right)}
	\right| > \varepsilon \sqrt{n}  \right) \\
	&\le \mathbb{P}(G_{n, K, t}^c) + \mathbb{P}\left( \bigcup_{|x|, m < K \sqrt{n} } \left\{  \left| \sum_{y > x} \Delta_y^{(x,m)} - \rho_y^{(x,m)} \right|  > \varepsilon \sqrt{n},  \lambda_{x,m} \leq\lfloor nt \rfloor \right\} \cap G_{n,K,t} \right) \\
	&\le \mathbb{P}(G_{n, K, t}^c) + K^2 n \sup _{|x|, m \le  K \sqrt{n} }
	\mathbb{P}\left( \left| \sum_{y \ge x} \Delta_y^{(x,m)} - \rho_y^{(x,m)} \right|  > \varepsilon \sqrt{n} , G_{n,K,t}\cap \left\{\lambda_{x,m} \leq\lfloor nt \rfloor \right\}  \right) \\
	&\le \mathbb{P}(G_{n, K, t}^c) + K^2 n \left( \exp\left( - C_{K, \varepsilon} \, n^{p } \log^{-8} n \right) + \exp\left( - C_{K, \varepsilon} \, \sqrt{n}  \log^{-10} n \right)\right) 
	.\end{align*}
In view of Lemma~\ref{lm:good-event}, the last line vanishes as we first take $n$ to infinity and then take $K$ to infinity.
\end{proof}

\bibliographystyle{amsalpha}


\providecommand{\bysame}{\leavevmode\hbox to3em{\hrulefill}\thinspace}
\providecommand{\MR}{\relax\ifhmode\unskip\space\fi MR }
\providecommand{\MRhref}[2]{%
  \href{http://www.ams.org/mathscinet-getitem?mr=#1}{#2}
}
\providecommand{\href}[2]{#2}


\end{document}